\newtheorem{theorem}{Theorem}[section]
\newtheorem{lemma}[theorem]{Lemma}
\newtheorem{proposition}[theorem]{Proposition}
\newtheorem{corollary}[theorem]{Corollary}
\newtheorem{question}[theorem]{Question}
\newtheorem{notation}[theorem]{Notation}
\newtheorem{remark}[theorem]{Remark}
\newcommand{\Z}{\mathbb{Z}}
\newcommand{\Q}{\mathbb{Q}}
\newcommand{\R}{\mathbb{R}}
\newcommand{\C}{\mathbb{C}}
\newcommand{\F}{\mathbb{F}}
\newcommand{\Ocal}{\mathcal{O}}
\newcommand{\JR}{{\rm JR }}
\newcommand{\disc}{{\rm disc\: }}
\newcommand{\Norm}{{\rm Norm }}
\newcommand{\Gal}{{\rm Gal }}
\begin{document}

\title{Julia Robinson numbers and arithmetical dynamic of quadratic polynomials}
\author{Marianela Castillo Fern\'andez, Xavier Vidaux and Carlos R. Videla}

\maketitle

\begin{abstract}
For rings $\Ocal_K$ of totally real algebraic integers, J. Robinson defined a set which is always $\{+\infty\}$ or of the form $[\lambda,+\infty)$ or $(\lambda,+\infty)$ for some real number $\lambda\ge4$. All known examples give either $\{+\infty\}$ or $[4,+\infty)$. In this paper, we construct infinitely many fields such that the set is an interval, but not equal to $[4,+\infty)$.\footnote{The three authors have been partially supported by the first author Fondecyt research projects 1130134 and 1170315, Chile. This work was partially financed by the first author Conicyt fellowship ``Beca Doctorado Nacional'' and by the Universidad de Concepci\'on, Chile. Part of this work was done when the first author was visiting the third author in Calgary, in January-April 2015. She thanks the Department of Mathematics and Computation of Mount Royal University for their hospitality during her stay. Part of this work was done while the third author was on a sabbatical leave during January-June 2017.}
\end{abstract}

MSC 2010: Primary: 11R04, 11R32, 11R80, 37P05
Secondary: 11R09, 11R11

Keywords: Julia Robinson number, totally real towers, totally real algebraic numbers, wreath product, iterates of quadratic polynomials



\section{Introduction}

Motivated by a problem in Logic, for any given field $K$ of totally real algebraic numbers, Julia Robinson \cite{Rob62} considered the following set $A(\Ocal_K)$:
$$
\{t\in\R\cup\{+\infty\}\colon \textrm{ there are infinitely many $r\in\Ocal_K$ such that $0\ll r\ll t$}\}, 
$$
where ``$0\ll r\ll t$'' means that every conjugate of $r$ lies strictly between $0$ and $t$, and $\Ocal_K$ is the ring of integers of $K$. The set $A(\Ocal_K)$ is either the set $\{+\infty\}$, or an interval of the form $[\lambda,+\infty)$ or $(\lambda,+\infty)$. We define the $\JR$ number of $\Ocal_K$ to be $+\infty$ when $A(\Ocal_K)=\{+\infty\}$, and $\lambda$ when it is an interval. A consequence of a theorem of Kronecker is that we always have $\lambda\ge4$ --- see \cite{Rob62}, or \cite{JyV} for a more detailed account. In all her examples, $A(\Ocal_K)$ is either $\{+\infty\}$ or $[4,+\infty)$. She asks whether there is any $K$ such that $A(\Ocal_K)$ is an open interval. 

The $\JR$ number of a ring is relevant for decision problems in Logic, as discovered by Julia Robinson, and it is also connected to the Northcott property of sets of algebraic numbers --- see \cite{VV16} and \cite{Widmer16}. Note that in \cite{VyV}, it is proved that there are subrings $R$ of rings of the form $\Ocal_K$ such that $A(R)$ is an open interval. It is not known whether any of these rings is an $\Ocal_K$. 

An immediate corollary of our main result is the following. 

\begin{theorem}\label{Main}
There are infinitely many fields $K$ with $A(\Ocal_K)$ distinct from $\{+\infty\}$ and $[4,+\infty)$. 
\end{theorem}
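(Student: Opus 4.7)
The plan is to construct, for infinitely many choices of parameters, a totally real infinite algebraic extension $K/\Q$ such that the $\JR$ number of $\Ocal_K$ is finite but strictly greater than $4$. The keywords in the abstract suggest the natural vehicle: fix a quadratic polynomial $f(x)\in\Q[x]$ and a base point $\theta_0$ arranged so that every root of $f^n(x)-\theta_0$ in $\overline{\Q}$ is totally real for every $n\ge 1$. The increasing union $K=\bigcup_n K_n$ of the splitting fields $K_n$ of $f^n(x)-\theta_0$ over $\Q$ is then a totally real infinite tower. Existence of $(f,\theta_0)$ with this property is a geometric constraint on the real dynamics of $f$: iteratively, $f$ must map a suitable real interval onto itself with two real preimages above each point.

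Next one identifies the Galois theory of the tower. For a generic choice of $f$, the Galois group $\Gal(K_n/\Q)$ is the iterated wreath product $\Z/2\Z\wr\cdots\wr\Z/2\Z$ ($n$ factors), by the classical theory of arboreal Galois representations. This rigid structure forces every algebraic integer $r\in\Ocal_K$ to have a highly constrained Galois orbit: if $r$ first appears at level $n$ in the tower, its conjugates are permuted by the wreath product action and hence are related by the dynamics of $f$ in a balanced binary tree whose size is a power of $2$.

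The technical core is to show that, for some $\lambda>4$, only finitely many $r\in\Ocal_K$ satisfy $0\ll r\ll \lambda$. Since each $K_n$ is a number field, containing only finitely many algebraic integers with all conjugates bounded by $\lambda$, it suffices to prove that any such $r$ must already lie in $K_{n_0}$ for some $n_0=n_0(\lambda)$. Suppose $r\in K_n\setminus K_{n-1}$ with $n$ arbitrarily large: then the orbit of $r$ under the wreath product action grows with $n$, and one argues that the real dynamics of $f$ forces at least one conjugate of $r$ to fall outside $(0,\lambda)$ once $n$ exceeds an explicit threshold depending on $f$ and $\lambda$. Making this quantitative dynamical bound work for some $\lambda>4$ with a concretely chosen $f$ is the main obstacle of the proof, since one must simultaneously control the spread of Galois-conjugate preimages and verify surjectivity of the arboreal representation onto the wreath product.

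Finally, finiteness of the $\JR$ number follows by exhibiting, for some fixed $T$, infinitely many totally real algebraic integers in $K$ whose conjugates are all bounded by $T$: concrete candidates come from adding rational integers to a single element of low level, yielding $T\in A(\Ocal_K)$ with $T$ finite. To obtain infinitely many pairwise non-isomorphic $K$, one varies the rational parameter of $f$ across a family $f_c(x)$ subject to those values of $c$ for which the real-dynamical hypothesis and the arboreal surjectivity both hold, distinguishing the resulting fields via ramification or discriminant invariants of the first few layers $K_1,K_2$.
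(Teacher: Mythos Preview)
Your proposal has a fundamental gap at what you call ``the technical core.'' You assert that if $r\in K_n\setminus K_{n-1}$ with $n$ large, then the real dynamics of $f$ forces some conjugate of $r$ outside $(0,\lambda)$. But an arbitrary algebraic integer $r\in\Ocal_{K_n}$ is \emph{not} a dynamical preimage of anything under $f$: the iteration of $f$ controls the conjugates of the tower generators $x_n$, not of general elements of the ring of integers. The wreath product does act on $r$, but nothing ties its orbit to the spreading behaviour you invoke. Indeed, the entire difficulty is precisely that numbers of the form $2+2\cos(2\pi/m)$, which have \emph{all} conjugates in $[0,4]$, might lie in $\Ocal_{K_n}$ for arbitrarily large $n$; your argument provides no mechanism to exclude them. (Your proof that the $\JR$ number is finite is also broken: adding distinct rational integers to a fixed element does not keep the conjugates uniformly bounded.)

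The paper's route is quite different. By Kronecker's theorem, the only totally real algebraic integers with all conjugates in $[0,4]$ are translates of $\zeta_m+\zeta_m^{-1}$, so $4$ is a minimum of $A(\Ocal_K)$ if and only if infinitely many such numbers lie in $\Ocal_K$. Since $K=\bigcup_n K_n$ is a $2$-tower, Gauss--Wantzel forces $m=2^d p_1\cdots p_k$ with the $p_i$ distinct Fermat primes, and by Remark~\ref{remsinnombre} one may treat the factors separately. For a Fermat prime $p>3$ one computes $\disc(x_n)$ recursively and shows that if $\nu$ is a non-residue modulo $p$ then $p\nmid\disc K_n$, whence $\sqrt{p}\notin K$ and $\zeta_p+\zeta_p^{-1}\notin K$. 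For $m=2^d$ one must exclude $\sqrt{2}$ from $K$; here the wreath-product structure is used, but only to count the quadratic subfields of the Galois closure $L_n$ and check that none of them is $\Q(\sqrt2)$ when $\nu=2^{2m}\mu$ with $\mu$ odd. The conclusion is the disjunction ``$\lambda>4$, or $\lambda=4$ but not a minimum'', not the strict $\lambda>4$ you aim for; finiteness of $\lambda$ is quoted from \cite{VyV}, where the $\JR$ number of the subring $\bigcup_n\Z[x_n]$ is computed explicitly.
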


Our fields $K$ are the ones that were already considered in \cite{VyV}, namely, fields of nested square roots defined in the following way. Let $\nu$ be a non-square integer $\ge4$. Let $x_1=\sqrt{\nu}$, and for each $n\ge1$, $x_{n+1}=\sqrt{\nu+x_n}$. Note that for each $n$, the field $K_n=\Q(x_n)$ is totally real, and that since $\nu$ is not a square, the tower increases at each step --- apply \cite[Cor. 1.3]{Stoll} to the iterated of $f(t)=t^2-\nu$.
Write
$$
\Ocal^{\nu}=\cup_n \Ocal_{K_n}. 
$$
Notice that $\Ocal^\nu$ is the ring of integers of $\bigcup_n K_n$. 

We prove:

\begin{theorem}\label{main prop}
Suppose that $\nu=2^{2m}\mu$, with $m\ge1$, $\mu\ge3$ odd and not a quadratic residue modulo any Fermat prime greater than $3$. The $\JR$ number of $\Ocal^{\nu}$ is either strictly between 4 and $+\infty$, or it is $4$ and it is not a minimum.
\end{theorem}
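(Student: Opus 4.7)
The theorem has two parts: (i) the $\JR$ number $\lambda$ is finite (equivalently, $A(\Ocal^\nu)\neq\{+\infty\}$), and (ii) $4\notin A(\Ocal^\nu)$. Part (i) is an explicit construction: consider $r_n:=\nu-x_n\in\Ocal_{K_n}$. Induction on the recursion $x_{n+1}^2=\nu+x_n$ shows every real conjugate of $x_n$ has absolute value at most $c:=(1+\sqrt{1+4\nu})/2$, which is $<\nu$ since $\nu\ge 4$, so every conjugate of $r_n$ lies in $(0,2\nu)$. Because the tower $K_n$ is strictly increasing, the $r_n$ are pairwise distinct, so $2\nu\in A(\Ocal^\nu)$ and $\lambda\leq 2\nu$.

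\textbf{Reducing $4\notin A(\Ocal^\nu)$ to a cyclotomic question.} Applying Kronecker's theorem to $2-r$, any totally real algebraic integer $r$ with all conjugates in $(0,4)$ satisfies $r=|1-\zeta|^2=2-\zeta-\zeta^{-1}$ for a root of unity $\zeta$, so $r\in\Q(\zeta_\infty)^+$. Hence every $r\in\Ocal^\nu$ with $0\ll r\ll 4$ lies in $A^\nu:=L^\nu\cap\Q(\zeta_\infty)^+$, where $L^\nu=\bigcup_n K_n$. If $A^\nu$ is a number field, Northcott's theorem (finitely many algebraic integers of bounded degree with bounded conjugates) finishes part (ii). The entire remaining task is thus to prove $A^\nu$ is finite over $\Q$.

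\textbf{Controlling $A^\nu$.} Because $[K_n:\Q]=2^n$, $L^\nu/\Q$ is pro-$2$, so $A^\nu$ is a pro-$2$, totally real, abelian extension of $\Q$; by Kronecker--Weber it is contained in the compositum of $\Q(\zeta_{2^\infty})^+$ with the fields $\Q(\cos(2\pi/F))$ over Fermat primes $F$. The key assertion is that $\Q(\sqrt\mu)$ is the unique real quadratic subfield of $L^\nu$. Granting this, $\mathrm{Gal}(A^\nu/\Q)$ has a unique subgroup of index $2$ and is therefore a cyclic pro-$2$ group; since the only $\Z_2$-extension of $\Q$ is the cyclotomic one, whose quadratic subfield $\Q(\sqrt 2)$ differs from $\Q(\sqrt\mu)$ (as $\mu$ is odd), $A^\nu$ cannot be infinite and is a number field. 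The Fermat-prime hypothesis is then what prevents the other cyclotomic contributions: the unique real quadratic subfield of $\Q(\cos(2\pi/F))$ is $\Q(\sqrt F)$, and $\mu$ being a non-residue modulo $F$ forces $F\nmid\mu$, so $\Q(\sqrt F)\neq\Q(\sqrt\mu)$ for every Fermat prime $F>3$.

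\textbf{The main obstacle.} The hard step is proving uniqueness of the real quadratic subfield of $L^\nu$. The natural route goes through the Galois closure $M$ of $L^\nu$, whose Galois group $G$ embeds in the iterated wreath product $W_\infty=\varprojlim(\Z/2)^{\wr n}$ acting on the tree of iterated preimages of $0$ under $f(t)=t^2-\nu$. Real quadratic subfields of $L^\nu$ correspond to nontrivial characters $G\to\{\pm 1\}$ that vanish on the stabilizer of a chosen leaf; when $G=W_\infty$, a direct calculation shows this stabilizer has image of codimension exactly one in $W_n^{\mathrm{ab}}\cong(\Z/2)^n$, producing precisely the character corresponding to $\Q(\sqrt\mu)$. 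The technical content is therefore to verify, via Stoll's arithmetic-dynamical criteria combined with the residue hypothesis on $\mu$ modulo Fermat primes greater than $3$, that $G$ equals (or sufficiently approximates) the full wreath product so no additional quadratic character survives --- this is where the particular form $\nu=2^{2m}\mu$ and the non-residue condition enter decisively.
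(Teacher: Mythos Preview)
Your strategy is genuinely different from the paper's and, once the loose ends are tied, actually proves more. The paper works prime by prime: for each Fermat prime $p>3$ it excludes $\zeta_p+\zeta_p^{-1}$ from $\Ocal^\nu$ by a discriminant/ramification argument (if $\sqrt{p}\in K_n$ then $p\mid\disc K_n\mid\disc(x_n)$, forcing $p\mid P_j(0)$ for some $j$ and hence $\nu$ a square mod $p$), and it separately excludes $\sqrt 2$ by listing all $2^n-1$ quadratic subfields of the Galois closure $L_n$ as $\Q(\sqrt{c_{i_1}\cdots c_{i_k}})$ and checking their $2$-adic valuations. The non-residue hypothesis is used only in the first of these two pieces.

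Your route---unique quadratic subfield of $\bigcup_n K_n$, hence $A^\nu$ procyclic, hence finite because the only $\Z_2$-extension of $\Q$ has first layer $\Q(\sqrt 2)\ne\Q(\sqrt\mu)$---is correct and more conceptual. The stabilizer calculation you defer is short: writing $W_n=(W_{n-1}\times W_{n-1})\rtimes C_2$ with abelianization $((g_0,g_1),\epsilon)\mapsto(\mathrm{ab}(g_0)+\mathrm{ab}(g_1),\epsilon)$, the leaf stabilizer $H_n=(H_{n-1}\times W_{n-1})\times\{0\}$ has image $(\Z/2)^{n-1}\times\{0\}$, of index~$2$, so the unique surviving character is the top-level sign, corresponding to $K_1=\Q(\sqrt\mu)$.

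The real issue is your last paragraph: you assert that the Fermat non-residue hypothesis ``enters decisively'' in verifying $G=W_\infty$ via Stoll's criteria. It does not. Stoll's theorem (the paper's Theorem~\ref{thmWreathProdGal}) gives $\Gal(L_n)\cong[C_2]^n$ as soon as $4\mid\nu$, which is precisely the hypothesis $m\ge1$. Thus your argument, once the stabilizer computation is written out, establishes the conclusion using only $m\ge1$ together with $\mu$ odd and not a perfect square; the Fermat-prime condition is an artifact of the paper's discriminant method, not of the underlying result. Your attempts to locate its role---the $F\nmid\mu$ remark in the penultimate paragraph, which is redundant once $A^\nu$ is known to be procyclic, and the final hand-wave---are therefore misplaced, and you should replace them with the one-line citation of Stoll's $4\mid\nu$ criterion.
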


In Section \ref{sec fermat primes} we prove that $3$ and $7$ are non-squares modulo any Fermat prime greater than $3$. So for example, for any odd integer $k$ and for any $m\ge1$, $\nu=2^{2m}k^2\cdot3$ and $\nu=2^{2m}k^2\cdot7$ satisfy the hypothesis of Theorem \ref{main prop}.

\section{Sketch of proof}

The fact that the $\JR$ number is not $\{+\infty\}$ is an immediate consequence of the fact that $\Ocal^{\nu}$ has a subring with $\JR$ number not $\{+\infty\}$ --- the subring in question is $\bigcup_n\Z[x_n]$ and its $\JR$ number is $\lceil\alpha\rceil+\alpha$, where $\alpha=(1+\sqrt{1+4\nu})/2$. This is proven in \cite[Thm. 1.4]{VyV}. The hypothesis of this theorem that $\nu$ must be congruent to $2$ or $3$ modulo $4$ is not satisfied for our choice of $\nu$, but this hypothesis was only there to ensure that the tower increases at each step. 

If $m$ is an integer, we write $\zeta_m$ for a primitive $m$-th root of unity. 

The $\JR$ number of $\Ocal^{\nu}$ is $4$ and is a minimum if and only if, in $\Ocal^{\nu}$ there exist infinitely many numbers of the form 
$$
\zeta_m^j+\zeta_m^{-j}=2\cos\left(\frac{2\pi j}{m}\right),
$$
with $j=1,\dots,m-1$, if and only if in $\Ocal^{\nu}$ there exist infinitely many numbers of the form 
$$
\zeta_m+\zeta_m^{-1}=2\cos\left(\frac{2\pi }{m}\right). 
$$
The first equivalence is a consequence of theorem of Kronecker, see \cite[Thm. 2.5]{Nark}. 

Since the fraction field of $\Ocal^{\nu}$ is a $2$-tower, we have the following equivalence for each $m$: $\zeta_m+\zeta_m^{-1}\in \Ocal^{\nu}$ if and only if $\zeta_m+\zeta_m^{-1}$ is constructible with ruler and compass, if and only if $m=2^d p_1 \dots p_k$, where $d\geq 0$ and $p_i$ are distinct Fermat Primes (by Gauss-Wantzel Theorem).  Thus, the strategy consists in finding $\nu$ such that $\Ocal^{\nu}$ has only finitely many numbers $\zeta_m+\zeta_m^{-1}$ with $m$ of the form $2^d p_1 \dots p_k$. 

The proof is then done in two steps. In Section \ref{proof 3}, we will prove the following proposition. 

\begin{proposition}\label{mainthqqq}
Assume that $K_n$ has degree $2^n$ over $\Q$. Suppose that 
\begin{enumerate}
	\item for every Fermat prime $p>3$, $\nu$ is not a square modulo $p$, and
	\item $\sqrt{2}$ is not in $\Ocal^{\nu}$.
\end{enumerate}
The $\JR$ number of $\Ocal^{\nu}$ is either strictly between 4 and $+\infty$, or it is $4$ and it is not a minimum.
\end{proposition}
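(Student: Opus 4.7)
By the reduction in Section~2, it suffices to prove that the set $M=\{m\in\Z_{\geq 1}:\zeta_m+\zeta_m^{-1}\in\Ocal^\nu\}$ is finite; since Gauss--Wantzel forces $m=2^d p_1\cdots p_k$ with the $p_i$ distinct Fermat primes, my plan is to use hypotheses (1) and (2) to show $M\subseteq\{1,2,3,4,6,12\}$. Hypothesis (2) handles the $2$-part: if $8\mid m$ then $\Q(\sqrt{2})=\Q(\zeta_8)^+\subseteq\Q(\zeta_m)^+\subseteq\Ocal^\nu$, contradicting (2), so $d\leq 2$. For the odd part, since any Fermat prime $p>3$ satisfies $p\equiv 1\pmod 4$, the unique quadratic subfield of $\Q(\zeta_p)^+$ is $\Q(\sqrt{p})$, so a divisor $p_i>3$ of $m$ would force $\sqrt{p_i}\in K_n$ for some $n$; hence it remains to show $\sqrt{p}\notin K_n$ for every such $p$ and every $n$.

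My strategy for this is to prove that $p$ is unramified in every $K_n$, which precludes $\Q(\sqrt{p})\subseteq K_n$ (because $\disc(\Q(\sqrt{p}))=p$). The hypothesis $[K_n:\Q]=2^n$ makes $f^n$ the minimal polynomial of $x_n$. Starting from the chain-rule identity $(f^n)'=2^n\prod_{i=0}^{n-1}f^i$, and using that each root $\beta$ of $f^{n-i}$ arises as $f^i(\alpha)$ for exactly $2^i$ roots $\alpha$ of $f^n$ (so $\prod_{f^n(\alpha)=0}f^i(\alpha)=\prod_\beta\beta^{2^i}=f^{n-i}(0)^{2^i}$ by Vieta, since $f^{n-i}$ is monic of even degree), I expect to obtain the closed formula
\[
\disc(f^n)=\pm\,2^{n\cdot 2^n}\prod_{j=1}^n f^j(0)^{2^{n-j}}.
\]
Since $p$ is odd, $p\mid\disc(f^n)$ would force $\bar f^j(0)=0$ in $\F_p$ for some $j\in\{1,\ldots,n\}$, where $\bar f(t)=t^2-\bar\nu$.

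To close the argument, take $j$ minimal with $\bar f^j(0)=0$. If $j=1$ then $\bar\nu=0$ is a square in $\F_p$, contradicting (1); if $j\geq 2$ then $\bar f^{j-1}(0)\in\F_p$ (the orbit of $0$ under $\bar f$ stays in $\F_p$) is a root of $\bar f$, giving a square root of $\bar\nu$ in $\F_p$, again contradicting (1). Hence $p\nmid\disc(f^n)$ for every $n$, and since $\disc(K_n)$ divides $\disc(f^n)$, the prime $p$ is unramified in $K_n$. This completes the plan: $M$ is contained in the six-element set above and is therefore finite. The main technical step will be the clean derivation of the discriminant formula; the concluding dynamical observation, that $0$ cannot return to itself under $\bar f$ when $\bar\nu$ is a non-square in $\F_p$, is short and self-contained.
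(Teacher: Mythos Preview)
Your proposal is correct and follows the same overall strategy as the paper: reduce to finiteness of $M$ via Kronecker and Gauss--Wantzel, eliminate the $2$-part using hypothesis~(2), and for each Fermat prime $p>3$ show that $p$ is unramified in every $K_n$ by controlling the odd part of $\disc(f^n)$ and invoking hypothesis~(1).

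The one substantive difference is the discriminant computation. The paper establishes the recursive formula $\disc(x_n)=(\disc(x_{n-1}))^2\cdot 2^{2^n}P_n(0)$ by applying the relative discriminant identity (Proposition~\ref{marcus1}) to the tower $\Q\subset K_{n-1}\subset K_n$ together with an explicit norm calculation. Your chain-rule approach, writing $(f^n)'=2^n\prod_{i=0}^{n-1}f^i$ and evaluating $\prod_{f^n(\alpha)=0}f^i(\alpha)=f^{n-i}(0)^{2^i}$ via the fibering of roots, gives the closed form $\disc(f^n)=\pm\,2^{n\cdot 2^n}\prod_{j=1}^{n}f^j(0)^{2^{n-j}}$ directly. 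Unwinding the paper's recursion yields the same expression, so the two routes agree; yours is shorter and avoids the base-change and norm machinery, while the paper's recursive version is closer in spirit to the tower structure and makes the inductive step in Lemma~\ref{lem fp not square} and Proposition~\ref{fp not square} transparent. Either way the crucial output is that the odd part of $\disc(f^n)$ is supported on the primes dividing $\prod_{j=1}^n P_j(0)$, after which your dynamical observation (that the orbit of $0$ under $t\mapsto t^2-\bar\nu$ in $\F_p$ cannot hit $0$ when $\bar\nu$ is a non-square) coincides with the paper's Proposition~\ref{fp not square}.
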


In Section \ref{sec galois group} we prove that if $\nu=2^{2m}\mu$, with $m\geq 1$ and $\mu\geq 3$ odd and square-free, then $\sqrt{2}$ is not in $\Ocal^{\nu}$.

Putting everything together, this proves Theorem \ref{main prop}. Note that if there are only finitely many Fermat primes, then item 1 of Proposition \ref{mainthqqq} is not relevant for our purposes, because they would contribute only to finitely many elements of the form $\zeta_m+\zeta_m^{-1}$.

\section{Discriminant of $x_n$.}\label{disc xn}

For each $n$, let $P_n(t)=f^{\circ n}(t)$, where $f(t)=t^2-\nu$. By \cite[Corollary 1.3]{Stoll}, each polynomial $P_n$ is the minimal polynomial of $x_n$. In this section we prove the following proposition. 

\begin{proposition}\label{marcus} Assume that $K_n$ has degree $2^n$ over $\Q$. We have
$$
\disc(x_1)=4\nu,
$$
and for $n\geq 2$ we have
$$
\disc(x_n)=(\disc(x_{n-1}))^2\cdot 2^{2^n} P_n(0).
$$
\end{proposition}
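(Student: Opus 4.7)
The plan is to reduce the recursion to a general identity for the discriminant of $P(t^2-c)$. Since $f^{\circ n}=f^{\circ (n-1)}\circ f$, we have
$$P_n(t)=P_{n-1}(t^2-\nu),$$
and in particular $P_n(0)=P_{n-1}(-\nu)$. It therefore suffices to prove: for any monic polynomial $P$ of degree $d\ge 1$ with roots $\alpha_1,\dots,\alpha_d$, and $Q(t):=P(t^2-c)$,
$$\disc(Q)=(-1)^d\cdot 4^d\cdot P(-c)\cdot \disc(P)^2.$$
Applying this with $P=P_{n-1}$, $c=\nu$, $d=2^{n-1}$ (which is even for $n\ge 2$, making $(-1)^d=+1$) yields the claim, since $4^{2^{n-1}}=2^{2^n}$ and $\disc(x_n)=\disc(P_n)$ by the hypothesis that $P_n$ is the minimal polynomial of $x_n$. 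The base case $n=1$ is the direct computation $\disc(t^2-\nu)=4\nu$.

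To prove the identity, I would start from
$$\disc(Q)=(-1)^{d(2d-1)}\prod_{\gamma}Q'(\gamma),$$
where $\gamma$ ranges over the $2d$ roots of $Q$. These come in pairs $\pm\beta_i$ with $\beta_i^2=c+\alpha_i$. The chain rule gives $Q'(t)=2t\,P'(t^2-c)$, hence
$$Q'(\beta_i)\,Q'(-\beta_i)=-4\beta_i^2\,P'(\alpha_i)^2=-4(c+\alpha_i)\,P'(\alpha_i)^2.$$
Taking the product over $i$ and using $\prod_i(c+\alpha_i)=(-1)^d P(-c)$ together with $\bigl(\prod_i P'(\alpha_i)\bigr)^{\!2}=\disc(P)^2$ gives $\prod_\gamma Q'(\gamma)=(-4)^d\cdot(-1)^d P(-c)\cdot\disc(P)^2=4^d P(-c)\disc(P)^2$. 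Multiplying by $(-1)^{d(2d-1)}=(-1)^d$ yields the stated identity.

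The only real obstacle is sign bookkeeping: three sign sources interact, namely the $(-1)^{\binom{2d}{2}}$ in the formula $\disc(Q)=(-1)^{\binom{2d}{2}}\prod Q'(\gamma)$, the $(-4)^d$ from pairing $\pm\beta_i$, and the $(-1)^d$ from $\prod_i(c+\alpha_i)=(-1)^d P(-c)$. They collapse to a single factor $(-1)^d$, and since $d=2^{n-1}$ is even precisely when $n\ge 2$, the sign disappears in the regime that matters. The $n=1$ case is outside the recursion and is handled by direct computation, so no special attention is needed there.
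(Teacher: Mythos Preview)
Your proof is correct, and it follows a genuinely different route from the paper's.

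The paper argues via the tower $\Q\subset K_{n-1}\subset K_n$: it computes the relative discriminant $\disc^{n}_{n-1}(x_n)=4(\nu+x_{n-1})$, then evaluates its norm $N_{n-1}=\Norm^{K_{n-1}}_{\Q}(4(\nu+x_{n-1}))=2^{2^{n}}P_{n}(0)$ by an inductive identity on the conjugates, and finally invokes the transitivity formula for discriminants in towers (the Marcus exercise quoted as Proposition~\ref{marcus1}) together with a basis-change argument comparing the power basis $\{1,x_n,\dots,x_n^{2^n-1}\}$ with the product basis $\{x_{n-1}^i x_n^j\}$.

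You instead stay at the level of polynomials and prove once and for all that $\disc\bigl(P(t^2-c)\bigr)=(-1)^d\,4^d\,P(-c)\,\disc(P)^2$, then specialize. This is shorter and entirely self-contained: it avoids both the norm computation and the basis comparison, and the sign bookkeeping you flag is exactly the delicate point, which you handle correctly (the three signs collapse to $(-1)^{2^{n-1}}=1$ for $n\ge2$). The paper's approach, on the other hand, makes the number-theoretic structure more visible (relative discriminants and norms), which connects naturally with the later use of $\disc(K_n)$ and ramification. Either proof is perfectly adequate here; yours is the more economical one.
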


The field $K_n$ has basis
$$
B_n:=\{1,x_n,x_n^2,\dots,x_n^{2^n-1}\}
$$
over $\Q$. Note that the field extension $K_n\slash K_m$ has degree $2^{n-m}$. We will denote by $\disc^{n}_{n-1}(x_n)$ the discriminant of the basis $(1,x_n)$ from $K_n$ to $K_{n-1}$. Hence, for $n\geq 1$, we have
	$$
	\disc^n_{n-1}(x_n) =\left|
	\begin{array}{cc}
		1 & x_n\\
		1 & -x_n
	\end{array}\right|^2
	=4(x_n)^2=4(\nu+x_{n-1}).
	$$

\begin{notation}
 For $n\geq 1$, we denote by $N_n$ the norm from $K_n$ to $\Q$ of $\disc^{n+1}_{n}(x_{n+1})$, and by $N_0$ the discriminant of $x_1$ from $K_1$ to $\Q$.
\end{notation}

\begin{proposition}\label{propnorm}
We have
\begin{enumerate} 
\item $N_0=2^2\nu$, and 
\item $N_n=2^{2^{n+1}} P_{n+1}(0)$ for any $n\geq 1$. 
\end{enumerate}  
\end{proposition}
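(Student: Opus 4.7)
For part (1), the computation is immediate from the definition: since $K_1 = \Q(\sqrt{\nu})$ has basis $\{1,x_1\}$ and the two embeddings send $x_1$ to $\pm\sqrt{\nu}$, the Vandermonde determinant gives
$$
N_0 = \disc(x_1) = \left|\begin{array}{cc} 1 & \sqrt{\nu}\\ 1 & -\sqrt{\nu}\end{array}\right|^2 = 4\nu = 2^2\nu.
$$

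For part (2), I would start from the formula already derived in the excerpt, namely $\disc^{n+1}_n(x_{n+1}) = 4(\nu + x_n)$. Taking the norm from $K_n$ to $\Q$, multiplicativity yields
$$
N_n = \Norm_{K_n/\Q}\bigl(4(\nu + x_n)\bigr) = 4^{[K_n:\Q]}\cdot\Norm_{K_n/\Q}(\nu + x_n) = 2^{2^{n+1}}\cdot\Norm_{K_n/\Q}(\nu+x_n),
$$
using that $[K_n:\Q] = 2^n$ by the standing assumption. So everything reduces to evaluating $\Norm_{K_n/\Q}(\nu + x_n)$.

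The key step is to relate this norm to the value $P_{n+1}(0)$. Since $P_n$ is the minimal polynomial of $x_n$ over $\Q$, the conjugates of $x_n$ are exactly the roots of $P_n$, and therefore
$$
\Norm_{K_n/\Q}(\nu + x_n) = \prod_{P_n(\alpha)=0}(\nu + \alpha) = (-1)^{\deg P_n}\cdot P_n(-\nu) = P_n(-\nu),
$$
where the last equality uses that $\deg P_n = 2^n$ is even for $n\ge 1$. To identify $P_n(-\nu)$ with $P_{n+1}(0)$, I would observe that $P_{n+1}(t) = f^{\circ(n+1)}(t) = f^{\circ n}(f(t)) = P_n(t^2-\nu)$, so evaluating at $t=0$ gives $P_{n+1}(0) = P_n(-\nu)$. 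Combining these identities yields $N_n = 2^{2^{n+1}} P_{n+1}(0)$, as claimed.

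There is no substantive obstacle here: the argument is a chain of straightforward identities, and the only care needed is tracking the sign in the product-over-roots expansion (which vanishes because $2^n$ is even) and correctly applying the composition identity for the iterates of $f$.
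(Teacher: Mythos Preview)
Your proof is correct. The paper's argument reaches the same intermediate point $N_n = 2^{2^{n+1}}\prod_\sigma(\nu + x_n^\sigma)$, but then evaluates this product by an explicit induction: it introduces auxiliary quantities $\ell_1=\nu^2$, $\ell_t=(\ell_{t-1}-\nu)^2$, and repeatedly pairs conjugates $\pm x_{n-t+1}^{\sigma}$ via $x_{n-t+1}^2 = \nu + x_{n-t}$ to push the index from $n$ down to $0$, arriving at $\ell_n-\nu=P_{n+1}(0)$. Your route is shorter: you recognize the product over conjugates as $(-1)^{2^n}P_n(-\nu)$ from the factorization of the monic minimal polynomial, and then identify $P_n(-\nu)=P_{n+1}(0)$ using the composition identity $P_{n+1}=P_n\circ f$. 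The paper's hands-on induction is essentially re-deriving that product formula one layer at a time; your version replaces the bookkeeping with two one-line identities, at the cost of invoking the (standard) fact that $\Norm_{K_n/\Q}(a+x_n)=\pm P_n(-a)$.
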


\begin{proof}
Item 1 is immediate from our above computation, so we prove item 2. Let $\ell_1=\nu^2$ and $\ell_n=((\ell_{n-1})-\nu)^2$ for $n\geq 2$.  Let $n\geq 1$. We have
$$
\begin{aligned}
N_n &=\Norm^{K_n}_{\Q} \left( \disc^{n+1}_{n}(x_{n+1})\right)\\
    &=\Norm^{K_n}_{\Q} ( 4(\nu+x_n))\\
		&=({2^2})^{2^n}\Norm^{K_n}_{\Q} (\nu+x_n)\\
		&=2^{2^{n+1}} \prod_{i=1}^{2^n} (\nu+x_n^{\sigma^{n}_{i}}),
\end{aligned}
$$
where the $\sigma^n_i$ are the $2^n$ embeddings from $K_n$ to $\C$.

\textbf{Fact.} \emph{For all $t\in \left\{0,\dots,n\right\}$ we have 
$$
\prod_{i=1}^{2^n}\left(\nu+x_n^{\sigma^{n-1}_{i}}\right)=\prod_{i=1}^{2^{n-t}}\left(\ell_t-(\nu+x_{n-t})^{\sigma_{i}^{n-t}}\right).
$$
}

We prove the fact by induction on $t$. Assume it is true for $t-1$, namely, 
$$
\prod_{i=1}^{2^n}\left(\nu+x_n^{\sigma^{n-1}_{i}}\right)=\prod_{i=1}^{2^{n-(t-1)}}\left(\ell_{t-1}-(\nu-x_{n-(t-1)})^{\sigma_{i}^{n-(t-1)}}\right),
$$
we have
$$
\begin{aligned}
\prod_{i=1}^{2^n}\left(\nu+x_n^{\sigma^{n-1}_{i}}\right)&=\prod_{i=1}^{2^{n-t+1}}\left(\ell_{t-1}-(\nu-x_{n-t+1)})^{\sigma_{i}^{n-t+1}}\right)\\
&=\prod_{i=1}^{2^{n-t+1}}\left(\left(\ell_{t-1}-\nu\right)+x_{n-t+1}^{\sigma_{i}^{n-t+1}}\right)\\
&=\prod_{i=1}^{2^{n-t}}\left(\left(\ell_{t-1}-\nu\right)-x_{n-t+1}^{\sigma_{i}^{n-t}}\right) \left(\left(\ell_{t-1}-\nu\right)+x_{n-t+1}^{\sigma_{i}^{n-t}}\right)\\
&=\prod_{i=1}^{2^{n-t}}\left(\left(\ell_{t-1}-\nu\right)^2-(x_{n-t+1}^{2})^{\sigma_{i}^{n-t}}\right)\\
&=\prod_{i=1}^{2^{n-t}}\left(\ell_t-(\nu+x_{n-t})^{\sigma_{i}^{n-t}}\right).
\end{aligned}
$$
This proves the fact. 

Hence, taking $t=n$ in the Fact above, we obtain
$$
\prod_{i=1}^{2^n}\left(\nu+x_n^{\sigma^{n-1}_{i}}\right)=\left(\ell_n-\nu\right)=P_{n+1}(0).
$$

\end{proof}

We need the following proposition --- see \cite[Chap. 2, Exercise 23, p. 43]{Marcus}. 


\begin{proposition}\label{marcus1}
Let $K\subset L\subset M$ be number fields, $\left[L \colon K\right]=n$, $\left[M \colon L\right]=m$, and let $\left\{\alpha_1,\dots, \alpha_n\right\}$ and $\left\{\beta_1,\dots, \beta_m\right\}$ be bases for $L$ over $K$ and $M$ over $L$, respectively.  We have
$$
\disc_K^M \left(\alpha_1\beta_1,\dots,\alpha_n\beta_m\right)
=\left(\disc_K^L(\alpha_1\dots,\alpha_n)\right)^m \cdot \Norm_K^L\left(\disc_L^M(\beta_1\dots,\beta_m)\right).
$$
\end{proposition}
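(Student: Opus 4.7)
The plan is to argue directly from the embedding characterization of the discriminant: for any basis $\{\gamma_1,\dots,\gamma_N\}$ of a degree $N$ extension $F/E$ of number fields, $\disc_E^F(\gamma_1,\dots,\gamma_N) = \det(\rho_i(\gamma_j))^2$ with $\rho_1,\dots,\rho_N$ the $E$-embeddings of $F$ into an algebraic closure. The strategy is to write the $nm\times nm$ embedding matrix for the product basis $\{\alpha_j\beta_\ell\}$ of $M/K$ in block form reflecting the tower $K\subset L\subset M$, and to compute its determinant via a Kronecker-type factorization.

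First I would organize the embeddings. Let $\sigma_1,\dots,\sigma_n$ be the $K$-embeddings of $L$ into $\overline{K}$, fix extensions $\tilde\sigma_i$ of each $\sigma_i$ to $\overline{K}$ (with $\tilde\sigma_1=\mathrm{id}$), and let $\eta_1,\dots,\eta_m$ be the $L$-embeddings of $M$ into $\overline{L}$. A standard argument shows that the $nm$ maps $\tau_{i,k}:=\tilde\sigma_i\circ\eta_k$ are precisely the $K$-embeddings of $M$ into $\overline{K}$, each appearing exactly once (no repetition follows from restricting to $L$ first; exhaustion follows by counting). Since $\alpha_j\in L$, we have $\tau_{i,k}(\alpha_j\beta_\ell)=\sigma_i(\alpha_j)\,\tau_{i,k}(\beta_\ell)$, so viewing the matrix $\mathcal{M}=(\tau_{i,k}(\alpha_j\beta_\ell))$ as an $n\times n$ grid of $m\times m$ blocks, its $(i,j)$-block equals $\sigma_i(\alpha_j)B_i$, where $B_i:=(\tau_{i,k}(\beta_\ell))_{k,\ell}$.

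Next I would compute $\det\mathcal{M}$ via the factorization
$$
\mathcal{M}=\operatorname{diag}(B_1,\dots,B_n)\cdot(A\otimes I_m),
$$
where $A=(\sigma_i(\alpha_j))$. This gives $\det\mathcal{M}=(\det A)^m\prod_{i=1}^n\det B_i$, and squaring yields
$$
\disc_K^M(\alpha_j\beta_\ell) \;=\; (\det A)^{2m}\prod_{i=1}^n(\det B_i)^2 \;=\; \bigl(\disc_K^L(\alpha_1,\dots,\alpha_n)\bigr)^{m}\cdot\prod_{i=1}^n(\det B_i)^2.
$$
To finish, observe that $B_i$ is obtained from $B_1$ by applying $\tilde\sigma_i$ entrywise, so $\det B_i=\tilde\sigma_i(\det B_1)$; although $\det B_1$ need not lie in $L$, its square does, namely $(\det B_1)^2=\disc_L^M(\beta_1,\dots,\beta_m)\in L$. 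Therefore $(\det B_i)^2=\sigma_i(\disc_L^M(\beta_1,\dots,\beta_m))$, and
$$
\prod_{i=1}^n(\det B_i)^2=\prod_{i=1}^n\sigma_i\bigl(\disc_L^M(\beta_1,\dots,\beta_m)\bigr)=\Norm_K^L\bigl(\disc_L^M(\beta_1,\dots,\beta_m)\bigr),
$$
which yields the desired formula.

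The main obstacle will be the embedding bookkeeping: verifying that the $\tau_{i,k}$ exhaust the $K$-embeddings of $M$ without repetition, lining up the indices so that the block structure factors literally as the Kronecker-type product above, and being careful that the identity $\det B_i=\tilde\sigma_i(\det B_1)$ only becomes intrinsic (independent of the choice of $\tilde\sigma_i$) after squaring, thanks to $(\det B_1)^2\in L$. Once this is set up, the remaining steps are routine linear algebra.
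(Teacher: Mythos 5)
Your proof is correct and complete. Note that the paper itself offers no proof of this proposition: it is simply quoted from Marcus, \emph{Number Fields}, Chapter 2, Exercise 23, so there is no argument in the paper to compare against. What you have written is the standard solution to that exercise, and all the delicate points are handled properly: the maps $\tau_{i,k}=\tilde\sigma_i\circ\eta_k$ are pairwise distinct (restrict to $L$ to recover $i$, then use injectivity of $\tilde\sigma_i$ to recover $k$) and exhaust the $nm$ $K$-embeddings of $M$ by counting; the block identity $\mathcal{M}=\operatorname{diag}(B_1,\dots,B_n)\cdot(A\otimes I_m)$ does reproduce the $(i,j)$-block $\sigma_i(\alpha_j)B_i$, giving $\det\mathcal{M}=(\det A)^m\prod_i\det B_i$; and your observation that $\det B_i=\tilde\sigma_i(\det B_1)$ depends on the choice of extension $\tilde\sigma_i$ only until one squares, at which point $(\det B_1)^2=\disc_L^M(\beta_1,\dots,\beta_m)\in L$ and the product becomes the norm, is exactly the right way to make the last step intrinsic. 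No gaps.
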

 
Proposition \ref{marcus} follows from Propositions \ref{propnorm} and \ref{marcus1} in the following way. Take  
$$
K=\Q,\quad L=K_{n-1}\quad\textrm{and}\quad M=K_n.
$$
The degree of $L$ over $K$ is $2^{n-1}$ and $L$ has basis
$$
\left\{1,x_{n-1}, x^{2}_{n-1},\dots, x^{2^{n-1}-1}_{n-1}\right\}
$$ 
over $K$, while the degree of $M$ over $L$ is $2$ and $M$ has basis $\left\{1, x_n\right\}$ over $L$. The set $\left\{\alpha_1\beta_1,\dots, \alpha_n\beta_m\right\}$ in Proposition \ref{marcus1} corresponds to the set
$$
B'=\left\{1,x_{n-1}, x^{2}_{n-1},\dots, x^{2^{n-1}-1}_{n-1}, x_n,x_{n-1}x_n, x^{2}_{n-1}x_n,\dots, x^{2^{n-1}-1}_{n-1}x_n\right\}. 
$$
This set $B'$ is a basis for $M$ over $K$. Indeed, we have 
$$
|B'|=2\left(2^{n-1}-1\right)+2=2^n=|B_n|,
$$  
and since $x^2_n=\nu+x_{n-1}$, each element of $B_n$ can be written as a $\Z$-linear combination of elements of $B'$. Similarly, each element of $B'$ is a $\Z$-linear combination of elements of $B_n$. Since the base change matrices from $B_n$ to $B'$ and from $B'$ to $B_n$ have an integral determinant and because the discriminants are also integers, we deduce 
$$
\disc_K^M(B')=\disc_K^M(B_n)=\disc_K^M(x_n).
$$
One obtains the formula in Proposition \ref{marcus} by using in Proposition \ref{marcus1} the formulas from Proposition \ref{propnorm}.

\section{Proof of Proposition \ref{mainthqqq}}\label{proof 3}

We will need the following proposition. 

\begin{proposition}[Prop. 2.13, \cite{Nark}]\label{m} Let $\theta$ be an algebraic integer. We have 
$$
\disc(\theta)=m^2\disc(\Q(\theta)),
$$
where $m$ is the index in $\Ocal_{\Q(\theta)}$ of the $\Z$-module $\Z[\theta]$.
\end{proposition}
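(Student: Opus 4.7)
My plan is to use the fact that, since $\theta$ is an algebraic integer and $n:=[\Q(\theta):\Q]$ is finite, both $\Z[\theta]$ and $\Ocal_{\Q(\theta)}$ are free $\Z$-modules of rank $n$, with $\Z[\theta]\subseteq\Ocal_{\Q(\theta)}$. The classical formula expressing the discriminant of a basis $(\alpha_i)$ as $\det(\sigma_j(\alpha_i))^2$ (where the $\sigma_j$ are the $n$ embeddings of $\Q(\theta)$ into $\C$) behaves predictably under $\Z$-linear change of basis: if $(\alpha_i)=M(\omega_j)$ for some matrix $M\in M_n(\Z)$, then $\disc(\alpha_1,\dots,\alpha_n)=(\det M)^2\disc(\omega_1,\dots,\omega_n)$.

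First I would fix a $\Z$-basis $\omega_1,\dots,\omega_n$ of $\Ocal_{\Q(\theta)}$ and express the power basis $1,\theta,\dots,\theta^{n-1}$ of $\Z[\theta]$ as $M(\omega_j)$ for some $M\in M_n(\Z)$. The change-of-basis formula above then gives
$$
\disc(\theta) = (\det M)^2 \cdot \disc(\omega_1,\dots,\omega_n),
$$
and the right-hand factor does not depend on the chosen basis of $\Ocal_{\Q(\theta)}$, since any two $\Z$-bases of $\Ocal_{\Q(\theta)}$ differ by a matrix in $GL_n(\Z)$ (hence of determinant $\pm 1$); this legitimates the notation $\disc(\Q(\theta))$.

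Second I would identify $|\det M|$ with $m=[\Ocal_{\Q(\theta)}:\Z[\theta]]$. To this end, I would invoke the Smith normal form over the PID $\Z$: there exist $P,Q\in GL_n(\Z)$ and integers $d_1,\dots,d_n$ such that $PMQ$ is diagonal with entries $d_1,\dots,d_n$. Then $|\det M|=|d_1\cdots d_n|$, while the cokernel of the inclusion $\Z[\theta]\hookrightarrow\Ocal_{\Q(\theta)}$ is isomorphic as an abelian group to $\bigoplus_i\Z/d_i\Z$, of order $|d_1\cdots d_n|$. Hence $m=|\det M|$, and substituting into the discriminant identity yields the claim.

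I do not foresee any substantive obstacle: the proposition is a routine consequence of two standard facts about free $\Z$-modules --- the discriminant's transformation law under $\Z$-linear change of basis, and the Smith normal form identifying the absolute value of the determinant with the order of the cokernel. The only point requiring care is the well-definedness of $\disc(\Q(\theta))$, which is exactly where one uses that matrices in $GL_n(\Z)$ have determinant $\pm1$.
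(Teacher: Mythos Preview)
Your argument is correct and is the standard proof of this classical fact. Note, however, that the paper does not actually supply a proof of this proposition: it is quoted as \cite[Prop.~2.13]{Nark} and used as a black box, so there is no ``paper's own proof'' to compare against. Your write-up is essentially the textbook argument (change-of-basis formula for discriminants combined with the Smith normal form to identify $|\det M|$ with the index), and it would serve perfectly well if one wanted to make the paper self-contained at this point.
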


The following remark shows that it is sufficient to consider $\zeta_m+\zeta_m^{-1}$ where $m\in \{2^d\colon d\geq 2\}\cup \{p\colon p \text{ is a Fermat prime}\}$.

\begin{remark}\label{remsinnombre}   Let $m_1$ and $m_2$ be positive coprime integers, and write $m=m_1m_2$.  The field $\Q(\zeta_{m_1 m_2})$ is the compositum of $\Q(\zeta_{m_1})$ and $\Q(\zeta_{m_2})$. 
\end{remark}

We need the following result.

\begin{proposition}\label{zeta}
\begin{enumerate}
	\item (\cite{Wash}, p. 15)\label{max subf} The field $\Q(\zeta_m+\zeta_m^{-1})$ is the maximal totally real subfield of $\Q(\zeta_m)$.  The extension $\Q(\zeta_m)/\Q(\zeta_m+\zeta_m^{-1})$ is of degree $2$.

	\item (\cite{Wash}, Ex. 2.1, p. 17)\label{sqr p} Let $p$ be a prime number. The field $\Q(\zeta_p)$ contains the field $\Q(\sqrt{p})$ if $p\equiv 1\pmod 4$ and contains $\Q(\sqrt{-p})$ if $p\equiv 3\pmod 4$.

	\item \label{ram} Let $K$ be a number field.  The number $p$ is ramified in $K$ if and only if $p$ divides $\disc K$.	
\end{enumerate}
\end{proposition}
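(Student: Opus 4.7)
The three items are classical. Items 1 and 2 are already referenced to Washington's book, and item 3 is a foundational theorem of algebraic number theory; my plan is to indicate the standard argument for each.

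For item 1, the strategy is to identify $\Q(\zeta_m+\zeta_m^{-1})$ with the fixed field of complex conjugation inside $\Q(\zeta_m)$. The element $\alpha=\zeta_m+\zeta_m^{-1}=2\cos(2\pi/m)$ is real, so $\Q(\alpha)\subseteq\Q(\zeta_m)\cap\R$. For the converse inclusion, $\zeta_m$ is a root of $X^2-\alpha X+1\in\Q(\alpha)[X]$, so $[\Q(\zeta_m):\Q(\alpha)]\le 2$; equality holds whenever $m\ge 3$, since then $\zeta_m\notin\R$. Because $\Q(\zeta_m)/\Q$ is Galois, any totally real subfield is fixed by complex conjugation and therefore contained in $\Q(\alpha)$, which proves maximality.

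For item 2, the natural tool is the quadratic Gauss sum. Setting $\tau=\sum_{a=1}^{p-1}\left(\frac{a}{p}\right)\zeta_p^a$, with $\left(\frac{\cdot}{p}\right)$ the Legendre symbol, we have $\tau\in\Q(\zeta_p)$. A direct expansion of $\tau^2$, after reindexing pairs $(a,b)$ by the ratio $b/a\bmod p$, yields the classical identity $\tau^2=\left(\frac{-1}{p}\right)p$. Since $\left(\frac{-1}{p}\right)=+1$ when $p\equiv1\pmod4$ and $-1$ when $p\equiv 3\pmod 4$, this gives $\Q(\sqrt{p})\subseteq\Q(\zeta_p)$ in the first case and $\Q(\sqrt{-p})\subseteq\Q(\zeta_p)$ in the second.

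For item 3, the plan is to invoke Dedekind's classical characterisation of ramified primes through the different ideal $\mathfrak{d}_{K/\Q}$. I would use two standard facts: the absolute norm of $\mathfrak{d}_{K/\Q}$ equals $|\disc K|$, and a prime $\mathfrak{p}\subseteq\Ocal_K$ divides $\mathfrak{d}_{K/\Q}$ if and only if it is ramified over the rational prime below it. Taking norms then gives: $p$ divides $\disc K$ iff some prime of $\Ocal_K$ above $p$ divides the different iff some such prime is ramified iff $p$ is ramified in $K$. None of the three arguments presents a real obstacle; the most technical piece is simply the bookkeeping for $\tau^2$ in item 2.
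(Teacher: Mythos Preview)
Your sketches are correct and are exactly the standard arguments one finds in the cited references. Note, however, that the paper does not actually prove this proposition: it is stated as a compilation of known facts, with items 1 and 2 attributed to Washington and item 3 left as the classical ramification--discriminant theorem. So there is no ``paper's proof'' to compare against; your write-up simply supplies the textbook arguments behind the citations, and each of the three (fixed field of complex conjugation, quadratic Gauss sum, Dedekind's different) is the expected one.
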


We prove the following proposition.

\begin{proposition}\label{fp zeta}
Let $p>3$ be a Fermat prime.  If $\zeta_p+\zeta_p^{-1} \in \Ocal^{\nu}$, then there exists $n\ge1$ such that $p$ divides $\disc K_n$.
\end{proposition}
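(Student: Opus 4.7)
The plan is to show that the hypothesis forces $\Q(\sqrt{p})\subset K_n$ for some $n$, and then to extract ramification of $p$ in $K_n$ from this inclusion. Since $p>3$ is a Fermat prime, we have $p=2^{2^k}+1$ with $k\ge 1$, so $p\equiv 1\pmod 4$; this congruence is what makes $\sqrt{p}$, rather than $\sqrt{-p}$, land inside $\Q(\zeta_p)$.

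First, assume $\zeta_p+\zeta_p^{-1}\in\Ocal^{\nu}$ and pick $n$ with $\zeta_p+\zeta_p^{-1}\in\Ocal_{K_n}$; in particular $\Q(\zeta_p+\zeta_p^{-1})\subset K_n$. By Proposition \ref{zeta}(\ref{sqr p}), since $p\equiv 1\pmod 4$, we have $\Q(\sqrt{p})\subset\Q(\zeta_p)$. Because $p>0$, the subfield $\Q(\sqrt{p})$ is totally real, so Proposition \ref{zeta}(\ref{max subf}) forces $\Q(\sqrt{p})\subset\Q(\zeta_p+\zeta_p^{-1})$, since the latter is the maximal totally real subfield of $\Q(\zeta_p)$. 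Chaining these inclusions yields $\Q(\sqrt{p})\subset K_n$.

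Second, I use that $p$ is ramified in $\Q(\sqrt{p})$: since $p\equiv 1\pmod 4$, the discriminant of $\Q(\sqrt{p})$ equals $p$, and Proposition \ref{zeta}(\ref{ram}) applied to $\Q(\sqrt{p})$ shows $p$ ramifies there. Ramification indices multiply in towers, so any prime of $K_n$ lying over a ramified prime of $\Q(\sqrt{p})$ above $p$ has ramification index at least $2$, meaning $p$ ramifies in $K_n$. Applying Proposition \ref{zeta}(\ref{ram}) once more, this time to $K_n$, gives $p\mid\disc K_n$.

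The argument is essentially a chain of inclusions plus transitivity of ramification; the only genuine content is the observation that, because $p\equiv 1\pmod 4$ and $\Q(\sqrt{p})$ is totally real, the maximal totally real subfield of $\Q(\zeta_p)$ must contain $\sqrt{p}$. No serious obstacle is anticipated.
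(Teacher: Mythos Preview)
Your proof is correct and follows essentially the same approach as the paper: both arguments use $p\equiv 1\pmod 4$ to obtain $\Q(\sqrt{p})\subset\Q(\zeta_p+\zeta_p^{-1})\subset K_n$ for some $n$, and then conclude by ramification of $p$ in $K_n$. Your version is slightly more explicit in separating the inclusion $\Q(\sqrt{p})\subset\Q(\zeta_p)$ from the totally-real descent to $\Q(\zeta_p+\zeta_p^{-1})$, and in spelling out transitivity of ramification, but these are just expansions of steps the paper states in one line.
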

\begin{proof}
Let $p=2^{2^m}+1>3$ be a Fermat prime. Note that, since $m\ge1$, $p$ is congruent to $1$ modulo $4$. Hence, by Proposition \ref{zeta}, we have 
$$
\Q(\sqrt{p})\subset\Q(\zeta_p+\zeta_p^{-1}),
$$
hence $\sqrt{p}\in \Ocal^{\nu}$ by hypothesis, so in particular $\sqrt{p}$ lies in $K_n$ for some $n\ge1$. Therefore, $p=(\sqrt{p})^2$ is ramified in $K_n$, so $p$ divides $\disc K_n$ by Proposition \ref{zeta}.
\end{proof}

\begin{lemma}\label{lem fp not square}
Let $p$ be an odd prime. If $p$ divides $\disc(x_n)$, then $p$ divides the product $P_1(0)\dots P_n(0)$.
\end{lemma}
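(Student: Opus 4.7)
The plan is a straightforward induction on $n$, driven by the recursive formula for $\disc(x_n)$ from Proposition \ref{marcus}.

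For the base case $n=1$, we have $\disc(x_1)=4\nu$ and $P_1(0)=-\nu$. If an odd prime $p$ divides $4\nu$, then since $p\ne 2$ we must have $p\mid\nu$, hence $p\mid P_1(0)$, as desired.

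For the inductive step, assume the statement holds for $n-1$, and suppose $p$ is an odd prime dividing
\[
\disc(x_n)=(\disc(x_{n-1}))^2\cdot 2^{2^n}\cdot P_n(0).
\]
Since $p$ is odd, $p\nmid 2^{2^n}$, and because $p$ is prime it must divide either $\disc(x_{n-1})$ or $P_n(0)$. In the first case, the inductive hypothesis gives $p\mid P_1(0)\cdots P_{n-1}(0)$, hence $p\mid P_1(0)\cdots P_n(0)$. In the second case, $p\mid P_n(0)$ directly yields $p\mid P_1(0)\cdots P_n(0)$.

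There is no real obstacle here; the content of the lemma is already packaged into Proposition \ref{marcus}, and the only observation needed is that the extra factor $2^{2^n}$ appearing at each stage is invisible to an odd prime. I would expect the write-up to be only a few lines.
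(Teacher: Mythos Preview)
Your proposal is correct and follows exactly the same approach as the paper: induction on $n$ using the recursive formula of Proposition \ref{marcus}, with the observation that the factor $2^{2^n}$ is irrelevant for an odd prime. The paper's write-up is even terser than yours, but the argument is identical.
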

\begin{proof}
By induction on $n$.  For $n=1$ we have $\disc (x_{1})=4\nu=-4P_1(0)$. 

If it is true for $n$, then it is true for $n+1$ by Proposition \ref{marcus}, since we have
$$
\disc(x_{n+1})=(\disc(x_{n}))^2\cdot 2^{2^{n+1}} P_{n+1}(0).
$$
\end{proof}

\begin{corollary}\label{cor fp not square}
Let $p$ be an odd prime. If $p$ divides $\disc(K_n)$ for some $n\ge1$, then $p$ divides the product $P_1(0)\dots P_n(0)$.
\end{corollary}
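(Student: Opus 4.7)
The plan is to reduce the claim to Lemma \ref{lem fp not square} through Proposition \ref{m}. Since by \cite[Cor. 1.3]{Stoll} the polynomial $P_n$ is the minimal polynomial of $x_n$, we have $K_n=\Q(x_n)$, and Proposition \ref{m} applied to $\theta=x_n$ yields
$$
\disc(x_n)=m^2\,\disc(K_n),
$$
where $m=[\Ocal_{K_n}:\Z[x_n]]$ is a positive integer (the index being finite because $x_n$ is an algebraic integer of degree $2^n$). In particular every prime divisor of $\disc(K_n)$ automatically divides $\disc(x_n)$.

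Given the odd prime $p$ with $p\mid \disc(K_n)$, it follows that $p\mid \disc(x_n)$, and Lemma \ref{lem fp not square} then immediately gives $p\mid P_1(0)\cdots P_n(0)$, which is what we want. The only sensitive point is the hypothesis that $p$ be odd, which is needed because the recursion
$$
\disc(x_{n+1})=(\disc(x_n))^2\cdot 2^{2^{n+1}}P_{n+1}(0)
$$
from Proposition \ref{marcus} carries a power-of-two factor that could contribute the prime $2$ to $\disc(x_n)$ without obliging any $P_k(0)$ to be even; this spurious contribution cannot occur for odd primes. I do not anticipate any substantial obstacle: the statement is essentially a one-line deduction combining the index formula of Proposition \ref{m} with the previous lemma.
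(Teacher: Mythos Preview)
Your proposal is correct and follows exactly the paper's own argument: apply Proposition~\ref{m} to see that $\disc(K_n)$ divides $\disc(x_n)$, then invoke Lemma~\ref{lem fp not square}. The additional remarks about $K_n=\Q(x_n)$ and the role of the oddness of $p$ are accurate but not needed for the deduction itself.
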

\begin{proof}
This is an immediate consequence of Lemma \ref{lem fp not square}, because we know by Proposition \ref{m} that 
the discriminant of $K_n$ divides the discriminant of $x_n$. 
\end{proof}

\begin{proposition}\label{fp not square}
Let $p>3$ be a Fermat prime. If $\nu$ is not a square modulo $p$ (so in particular $p$ does not divide $\nu$), then for each $n\geq 1$, $p$ does not divide $ \disc K_n$.
\end{proposition}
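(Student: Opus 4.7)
The plan is to reduce the claim, via Corollary \ref{cor fp not square}, to showing that $p$ divides none of the values $P_1(0),\dots,P_n(0)$, and then exploit the recursive definition of $P_n$ together with the assumption that $\nu$ is a non-residue modulo $p$.

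More precisely, by Corollary \ref{cor fp not square}, if $p\mid\disc K_n$ then $p\mid P_1(0)\cdots P_n(0)$, so it suffices to prove that $p\nmid P_k(0)$ for every $k\ge 1$. Since $P_k=f^{\circ k}$ with $f(t)=t^2-\nu$, the values $a_k:=P_k(0)$ satisfy the recurrence
$$
a_1=-\nu,\qquad a_{k+1}=a_k^{\,2}-\nu.
$$
I would prove by induction on $k$ that $a_k\not\equiv 0\pmod p$. For the base case $k=1$, note that $0$ is a square modulo $p$; since $\nu$ is not a square modulo $p$, this forces $p\nmid \nu$, hence $p\nmid a_1$. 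For the inductive step, suppose $a_k\not\equiv 0\pmod p$ but $a_{k+1}\equiv 0\pmod p$. Then
$$
\nu\equiv a_k^{\,2}\pmod p,
$$
which exhibits $\nu$ as a square modulo $p$, contradicting the hypothesis.

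Combining the induction with Corollary \ref{cor fp not square} gives the conclusion. The argument is essentially a one-line observation once the recursive formula is in place, and the only subtle point is the base case: one must remember that ``$\nu$ is not a square modulo $p$'' in the sense used here excludes $\nu\equiv 0\pmod p$ (as the parenthetical remark in the statement of the proposition emphasizes), so there is no separate case to handle. I do not anticipate a serious obstacle; the work has been done in Section \ref{disc xn} and in Corollary \ref{cor fp not square}, and the present proposition is where the arithmetic hypothesis on $\nu$ interacts with the dynamics of $f(t)=t^2-\nu$ in its purest form.
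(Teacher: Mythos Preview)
Your proposal is correct and follows essentially the same route as the paper: reduce via Corollary~\ref{cor fp not square} to showing $p\nmid P_k(0)$, and then use the recursion $P_k(0)=P_{k-1}(0)^2-\nu$ to see that $p\mid P_k(0)$ would force $\nu$ to be a square modulo $p$. The only cosmetic difference is that the paper phrases the argument as a direct contradiction (pick the offending $j$ and split into $j=1$ versus $j>1$) rather than as an induction on $k$; in fact your inductive hypothesis $a_k\not\equiv 0$ is never used in the step, so your proof is really the same contradiction argument.
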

\begin{proof}
We prove by induction on $n$.  For $n=1$ we have that 
$$
\disc \Q(x_{1})=\left\{
\begin{aligned}
\nu   &, \text{ if } \nu\equiv 1 \mod 4\\
4\nu&, \text{ if } \nu\equiv 2, 3 \mod 4
\end{aligned}
\right.
$$
In both cases, since $p$ does not divide $\nu$, we have that $p$ does not divide $\disc \Q(x_1)$.  

Assume by contradiction that $p$ divides the discriminant of $K_n$, so that $p$ divides $P_j(0)$ for some $j\in\{1,\dots,n\}$ by Corollary \ref{cor fp not square}. If $j=1$, then $p$ divides $\nu$, which contradicts our hypothesis. Assume $j>1$. Recall that $P_n(t)=f^{\circ n}(t)$, where $f(t)=t^2-\nu$. Therefore, we have 
$$
P_j(0)=P_{j-1}(0)^2-\nu,
$$
which contradicts the hypothesis that $\nu$ is not a square modulo $p$. 
\end{proof}

\begin{proof}[Proof of Proposition \ref{mainthqqq}]
We follow the strategy described in the introduction. Let $p$ be a Fermat Prime greater than $3$.  By Proposition \ref{fp not square}, if $\nu$ is not a square modulo $p$, then $p$ does not divide $\disc K_n$, so $\zeta_p+\zeta_p^{-1}$ does not lie in $\Ocal^{\nu}$ by Proposition \ref{fp zeta}.

Let $s_1=\sqrt{2}$ and $s_n=\sqrt{2+s_{n-1}}$. Since
$$
\zeta_{2^d}+\zeta_{2^d}^{-1}=
\begin{cases}
-2 &\text{ if } d=1\\
s_{d-1} &\text{ if } d\geq 2, 
\end{cases}
$$
and $\sqrt{2}$ is not in $\Ocal^{\nu}$ by hypothesis, $\zeta_{2^d}+\zeta_{2^d}^{-1}$ does not lie in $\Ocal^{\nu}$ for any $d\geq 2$. Remark \ref{remsinnombre} allows us to conclude.
\end{proof}


\section[Some non-squares modulo Fermat primes]{Some non-squares modulo all Fermat primes greater than $3$}\label{sec fermat primes}

We start with an easy lemma. 

\begin{lemma}\label{lemnsquare}
For all $n\geq1$, we have
$$
2^{2^n}+1\equiv
\begin{cases}
3 \pmod 7 &\text{ if } n \text{ is even}\\
5 \pmod 7, &\text{ if } n \text{ is odd},
\end{cases}
$$
and 
$$
2^{2^n}+1\equiv 2 \pmod 3.
$$
\end{lemma}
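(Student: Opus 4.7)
The plan is to exploit the small multiplicative orders of $2$ modulo $3$ and modulo $7$, and then reduce the exponent $2^n$ modulo the relevant order.

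For the congruence modulo $3$: I would use that $2\equiv -1\pmod 3$, so $2^{2^n}\equiv(-1)^{2^n}\pmod 3$. Since $n\geq 1$, the exponent $2^n$ is even, hence $2^{2^n}\equiv 1\pmod 3$ and therefore $2^{2^n}+1\equiv 2\pmod 3$. This is immediate and needs no case analysis.

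For the congruence modulo $7$: I would note that $2^3=8\equiv 1\pmod 7$, so the multiplicative order of $2$ modulo $7$ equals $3$. Therefore $2^{2^n}\pmod 7$ depends only on $2^n\pmod 3$. Using again that $2\equiv -1\pmod 3$, I get $2^n\equiv(-1)^n\pmod 3$; that is, $2^n\equiv 1\pmod 3$ when $n$ is even and $2^n\equiv 2\pmod 3$ when $n$ is odd. Substituting back,
\[
2^{2^n}\equiv
\begin{cases}
2^1\equiv 2\pmod 7 & \text{if $n$ is even},\\
2^2\equiv 4\pmod 7 & \text{if $n$ is odd},
\end{cases}
\]
which gives $2^{2^n}+1\equiv 3\pmod 7$ when $n$ is even and $2^{2^n}+1\equiv 5\pmod 7$ when $n$ is odd, as claimed.

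I do not expect any real obstacle here: both parts are direct applications of Fermat's little theorem (or a one-line order computation) combined with a reduction of the exponent modulo the order. The only thing to be careful about is keeping straight the two different moduli involved in the $\bmod\,7$ computation, namely that one works with $2^n$ modulo $3$ (the order) in order to simplify $2^{2^n}$ modulo $7$. A short induction on $n$ (or directly invoking parity) makes this bookkeeping transparent.
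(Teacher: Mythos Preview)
Your proposal is correct and follows essentially the same route as the paper: both reduce the exponent $2^n$ modulo $3$ via $2\equiv -1\pmod 3$ and then use $2^3\equiv 1\pmod 7$ (equivalently, $8^k\equiv 1\pmod 7$) to evaluate $2^{2^n}$. Your treatment of the congruence modulo $3$ is in fact slightly more direct than the paper's, which writes $2^n=1+3k$ or $2+3k$ and tracks the parity of $k$ rather than simply observing that $2^n$ is even.
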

\begin{proof}
Since $2^n$ is congruent to $(-1)^n$ modulo $3$, we have $2^n=1+3k$ for some odd $k$ when $n$ is even, and $2^n=2+3k$ for some even $k$ when $n$ is odd. Therefore, we have
$$
2^{2^n}=
\begin{cases}
2^{1+3k}=2\cdot 8^k\equiv 2\pmod 7&\textrm{ if $n$ is even}\\
2^{2+3k}=4\cdot 8^k\equiv 4\pmod 7&\textrm{ if $n$ is odd}.
\end{cases}
$$
and
$$
2^{2^n}=
\begin{cases}
2^{1+3k}\equiv 2\cdot (-1)^k\equiv 1\pmod 3&\textrm{ if $n$ is even}\\
2^{2+3k}\equiv 4\cdot (-1)^k\equiv 1\pmod 3&\textrm{ if $n$ is odd}.
\end{cases}
$$
\end{proof}

\begin{proposition}\label{nsquare}
The numbers 3 and 7 are not squares modulo all Fermat primes greater than $3$.
\end{proposition}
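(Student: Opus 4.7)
The plan is to apply quadratic reciprocity, which simplifies nicely for Fermat primes greater than $3$ because they are all congruent to $1$ modulo $4$. Indeed, if $p=2^{2^n}+1$ with $n\geq 1$, then $2^{2^n}$ is divisible by $4$, so $p\equiv 1\pmod 4$ and hence $(p-1)/2$ is even.

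For the number $3$: quadratic reciprocity gives
$$
\left(\frac{3}{p}\right)=\left(\frac{p}{3}\right)(-1)^{\frac{p-1}{2}\cdot\frac{3-1}{2}}=\left(\frac{p}{3}\right),
$$
since $(p-1)/2$ is even. By Lemma \ref{lemnsquare}, $p\equiv 2\pmod 3$, and since the nonzero squares modulo $3$ are exactly $\{1\}$, we get $\left(\frac{p}{3}\right)=-1$, so $3$ is not a square modulo $p$.

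For the number $7$: quadratic reciprocity gives
$$
\left(\frac{7}{p}\right)=\left(\frac{p}{7}\right)(-1)^{\frac{p-1}{2}\cdot\frac{7-1}{2}}=\left(\frac{p}{7}\right),
$$
again because $(p-1)/2$ is even. By Lemma \ref{lemnsquare}, $p\equiv 3\pmod 7$ if $n$ is even and $p\equiv 5\pmod 7$ if $n$ is odd. The nonzero squares modulo $7$ are $\{1,2,4\}$, and neither $3$ nor $5$ belongs to this set, so $\left(\frac{p}{7}\right)=-1$, and therefore $7$ is not a square modulo $p$.

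There is no real obstacle here: the residues of $p$ modulo $3$ and modulo $7$ are pinned down by the preceding lemma, and the congruence $p\equiv 1\pmod 4$ removes the awkward sign factor in quadratic reciprocity. The only thing to double-check when writing out the proof is the list of nonzero quadratic residues modulo $3$ (namely $\{1\}$) and modulo $7$ (namely $\{1,2,4\}$), which can be listed by direct computation.
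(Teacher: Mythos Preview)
Your proof is correct and follows essentially the same approach as the paper: both use quadratic reciprocity (observing that the sign factor vanishes because $p\equiv 1\pmod 4$) together with Lemma~\ref{lemnsquare} to reduce to checking that $p$ is a non-residue modulo $3$ and modulo $7$. The only cosmetic difference is that the paper writes the sign computation as $(-1)^{2^{2^n-1}\cdot 3}=1$ rather than invoking $p\equiv 1\pmod 4$ directly.
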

\begin{proof}
Let $p=2^{2^n}+1$ be a Fermat prime greater than $3$. By the quadratic reciprocity law, since $p\ne7$, we have
$$
\begin{aligned}
\binom{7}{p} \binom{p}{7} &=(-1)^{\frac{2^{2^n}\cdot 6}{4}}\\
                          &=(-1)^{2^{2^n-1}\cdot 3}\\
													&=1, 
\end{aligned}
$$
hence, $7$ is a square modulo $p$ if and only if $p$ is a square modulo $7$.  Since the set of squares modulo $7$ is $\{0,1,2,4\}$, we deduce by Lemma \ref{lemnsquare} that $p$ is not a square modulo $7$. 

Similarly, we have $\binom{3}{p} \binom{p}{3}=1$, so we can proceed as above. 
\end{proof}

\section{Galois Group of $K_n$.}\label{sec galois group}

In this section, we assume that $\nu$ is not a square. 

Let $C_2$ be the cyclic group of order $2$, and denote by $[C_2]^n$ the $n$-fold wreath product of $C_2$ --- for basic facts about the wreath product, we refer the reader to \cite{Rotman}.  

Let $L_n$ be the Galois closure of $K_n$, and $\Gal(L_n)$ be its Galois group. The following is a particular case of a theorem by M. Stoll \cite[Section 3, p. 243]{Stoll}. 

\begin{theorem}\label{thmWreathProdGal}
If $\nu$ is a multiple of $4$, then $\Gal(L_n)\cong [C_2]^n$.
\end{theorem}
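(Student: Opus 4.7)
The argument is by induction on $n$. For $n=1$, $K_1=\Q(\sqrt{\nu})$ is a quadratic extension of $\Q$ (since $\nu$ is not a square), so $\Gal(L_1/\Q)\cong C_2=[C_2]^1$.

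For the inductive step, the identity $P_n(t)=P_{n-1}(t^2-\nu)$ shows that the roots of $P_n$ are $\pm\sqrt{\nu+\alpha}$ where $\alpha$ runs over the $2^{n-1}$ roots $\alpha_1,\dots,\alpha_{2^{n-1}}$ of $P_{n-1}$, so
$$
L_n=L_{n-1}\bigl(\sqrt{\nu+\alpha_1},\dots,\sqrt{\nu+\alpha_{2^{n-1}}}\bigr).
$$
The action of $\Gal(L_n/\Q)$ on the depth-$n$ binary preimage tree $T_n$ of $0$ under $P_n$ yields a canonical embedding $\Gal(L_n/\Q)\hookrightarrow\mathrm{Aut}(T_n)\cong[C_2]^n$; comparing orders reduces surjectivity to the equality $[L_n:L_{n-1}]=2^{2^{n-1}}$, which by Kummer theory is equivalent to the $\F_2$-linear independence of the classes $[\nu+\alpha_i]$ in $L_{n-1}^\times/(L_{n-1}^\times)^2$.

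The subspace of dependencies is Galois-stable, hence an $\F_2[\Gal(L_{n-1}/\Q)]$-submodule of the permutation module $\F_2^{2^{n-1}}$. Because $[C_2]^{n-1}$ is a $2$-group acting transitively on the $2^{n-1}$ leaves, the socle of this permutation module over $\F_2$ is the $1$-dimensional fixed line spanned by the all-ones vector $\mathbf{1}$; hence a non-trivial dependency exists if and only if the full product $\prod_i(\nu+\alpha_i)$ is a square in $L_{n-1}$. A direct computation from $P_{n-1}(X)=\prod_i(X-\alpha_i)$ identifies $\prod_i(\nu+\alpha_i)$ with $\pm P_{n-1}(-\nu)=\pm P_n(0)$, so the inductive step reduces to proving that $P_n(0)$ is not a square in $L_{n-1}$.

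The hypothesis $4\mid\nu$ enters here. Writing $P_k(0)=\nu A_k$ and using the recurrence $A_k=\nu A_{k-1}^2-1$ with $A_1=-1$, one obtains $A_k\equiv-1\pmod 4$ for all $k\ge 1$. For $n=1$ we are done, since $P_1(0)=-\nu<0$ cannot be a square in the totally real field $L_0=\Q$. For $n\ge 2$, the subgroup of $\Q^\times/(\Q^\times)^2$ consisting of classes whose square root lies in $L_{n-1}$ is generated by $[\nu],[P_2(0)],\dots,[P_{n-1}(0)]$ (the $\sqrt{P_k(0)}$ arise as products of level-$k$ square roots already inside $L_k\subset L_{n-1}$). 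The mod-$4$ rigidity of the $A_k$, together with the non-squareness of $\nu$ and the discriminant formula of Proposition~\ref{marcus}, lets one verify that $[P_n(0)]$ is not a product of these generators, so $P_n(0)$ is not a square in $L_{n-1}$. The main obstacle is exactly this final bookkeeping of square classes across the tower; this is the combinatorial heart of Stoll's argument in \cite[Sec.~3]{Stoll}, and the hypothesis $4\mid\nu$ is precisely what ensures the relevant $2$-adic square classes align.
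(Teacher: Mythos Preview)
The paper does not prove this theorem at all; it is quoted as a special case of Stoll's result, with a bare citation to \cite[Section~3]{Stoll}. So there is no ``paper's own proof'' to compare against.

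Your plan correctly reconstructs the architecture of Stoll's argument: the tree embedding $\Gal(L_n)\hookrightarrow[C_2]^n$, the Kummer-theoretic reduction of the index $[L_n:L_{n-1}]$ to independence of the classes $[\nu+\alpha_i]$, the socle argument (a transitive $2$-group acting on $\F_2^{2^{n-1}}$ has $1$-dimensional socle $\langle\mathbf 1\rangle$), and the identification $\prod_i(\nu+\alpha_i)=P_{n-1}(-\nu)=P_n(0)$. Combined with the inductive hypothesis and the description of the quadratic subfields of $L_{n-1}$, this reduces everything to the $2$-independence of $c_1,\dots,c_n$ in $\Q^\times/(\Q^\times)^2$ --- which is exactly Theorem~\ref{thmStoll1}. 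So far this is a faithful outline of Stoll.

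The soft spot is the last paragraph. The congruence $A_k\equiv -1\pmod 4$ is correct, but by itself it does not force the $2$-independence of the $c_k$: it constrains only the residue modulo $4$, whereas $2$-independence is a statement about the full square class, governed by odd primes as well. Your appeal to Proposition~\ref{marcus} is misplaced --- that is a discriminant identity for $x_n$ and has no bearing on square classes of the integers $P_k(0)$. You acknowledge this yourself by deferring the ``combinatorial heart'' to \cite[Sec.~3]{Stoll}; so what you have written is an accurate road map rather than a self-contained proof. In fairness, that is exactly the status the result has in the paper too.
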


In order to show that $\sqrt{2}$ is not in $K_n$, we will show that it is not in $L_n$. For this we will use a counting argument. First we will show that there are exactly $2^n-1$ quadratic subfields of $L_n$. Then we will construct $2^n-1$ quadratic subfields, none of which is $\Q(\sqrt2)$. 

\begin{lemma}\label{thLemGroups}
There are $2^{n}-1$ subfields of $L_n$ which are quadratic extensions of $\Q$. 
\end{lemma}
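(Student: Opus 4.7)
My plan is to use the Galois correspondence to reduce the count to a purely group-theoretic statement about $[C_2]^n$, and then to compute the abelianization of this group inductively. By Theorem \ref{thmWreathProdGal} (applicable here since under the hypotheses of Theorem \ref{main prop} we have $\nu=2^{2m}\mu$ with $m\ge 1$, so $4\mid\nu$), the group $G:=\Gal(L_n)$ is isomorphic to $[C_2]^n$. Quadratic subfields of $L_n$ correspond bijectively to subgroups of $G$ of index $2$; and since $\mathrm{Aut}(C_2)=\{1\}$, these in turn correspond bijectively to nontrivial homomorphisms $G\to C_2$, each of which factors through $G^{ab}$. Hence the number of quadratic subfields is exactly $|\mathrm{Hom}(G^{ab},C_2)|-1$.

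I would then show by induction on $n$ that $([C_2]^n)^{ab}\cong C_2^n$. The base case $n=1$ is immediate. For the inductive step, I use the recursive description
$$
[C_2]^n \;=\; C_2\wr [C_2]^{n-1} \;=\; C_2^{\,2^{n-1}}\rtimes [C_2]^{n-1},
$$
in which $[C_2]^{n-1}$ acts on $C_2^{\,2^{n-1}}$ by permuting the $2^{n-1}$ coordinates according to its canonical transitive action on a $2^{n-1}$-element set. Denoting the coordinate generators by $a_1,\dots,a_{2^{n-1}}$, the semidirect product relations give $b a_i b^{-1}=a_{b\cdot i}$, so in the abelianization we obtain $a_i=a_{b\cdot i}$ for every $b\in[C_2]^{n-1}$. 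Transitivity then forces all $2^{n-1}$ copies of $C_2$ to collapse to a single copy, yielding
$$
([C_2]^n)^{ab} \;\cong\; C_2\times ([C_2]^{n-1})^{ab} \;\cong\; C_2\times C_2^{n-1} \;=\; C_2^n
$$
by the inductive hypothesis. Therefore $|\mathrm{Hom}(([C_2]^n)^{ab},C_2)|=2^n$, and the lemma follows.

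The only substantive step is the abelianization computation, i.e.\ the general identity $(A\wr B)^{ab}\cong A^{ab}\times B^{ab}$ for wreath products with a transitive action on the index set. This is standard group theory, but it is the only place where one genuinely uses the wreath-product structure (and specifically the transitivity of the canonical action); the Galois-theoretic translation in the first paragraph is essentially formal.
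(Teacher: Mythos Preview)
Your argument is correct and coincides with the paper's second proof: both reduce via the Galois correspondence to counting index-$2$ subgroups of $[C_2]^n$ and then use that $([C_2]^n)^{\mathrm{ab}}\cong C_2^n$; the paper cites \cite[Proof of Lemma~1.5]{Stoll} for this isomorphism, whereas you supply a short self-contained inductive computation via the coinvariants of the transitive wreath product. (The paper also records an alternative first proof, going through the Frattini subgroup, Burnside's basis theorem, and Woryna's determination of the rank of $[C_2]^n$.)
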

\begin{proof}
We will give two different proofs. By the Galois correspondence, we need to count how many subgroups $H$ of $[C_2]^n$ are such that the quotient $[C_2]^n/H$ is isomorphic to $C_2$. 

\emph{Proof 1}. We prove that $[C_2]^n$ has $2^n-1$ subgroups of index $2$ (they are maximal subgroups). Let $M$ be the set of maximal subgroups of $[C_2]^n$. Since $[C_2]^n$ has order $2^{2^n-1}$, it is a $2$-group. The groups in $M$ have index $2$, so they are normal. The intersection of all the maximal subgroups of $[C_2]^n$ is called the \emph{Frattini subgroup} of $[C_2]^n$ and is denoted by $\phi([C_2]^n)$. By \cite[Th 5.48]{Rotman}, the group $\phi=\phi([C_2]^n)$ is normal, and the quotient $[C_2]^n/\phi$ is an $\F_2$-vector space. Let $d$ be the dimension of this vector space. 

For every $H\in M$, since $\phi\le H \le [C_2]^n$, the quotient $H/\phi$ is a subspace of $[C_2]^n/\phi$, and every subspace of $[C_2]^n/\phi$ corresponds to a maximal subgroup $H$. It is easy to see that the number of non-trivial subspaces of a vector space of dimension $d$ over $\F_2$ is $2^d-1$. So we have $2^d-1$ maximal subgroups. 

On the other hand, by Burnside's basis Theorem \cite[Th 5.50]{Rotman}, all minimal systems of generators of $[C_2]^n$ have the same cardinal, and this cardinal is $d$. However, the cardinal of a minimal set of generators for wreath products of cyclic groups has been computed by Woryna --- see the comments after Theorem 1.1 in \cite{Wor}. In our case, we get $d=n$. 

\emph{Proof 2}. We use the following well-known results from Group theory. Let $G$ be a group and $D(G)$ the commutator subgroup of $G$. Let $H$ be any subgroup of $G$. The following are true: 
\begin{enumerate}
\item $D(G)$ is contained in $H$ if and only if $H$ is a normal subgroup of $G$ and $G/H$ is abelian. 
\item If $D(G)$ is contained in $H$, then $(G/D(G))/(H/D(G))$ is isomorphic to $G/H$. 
\end{enumerate} 
Moreover, we need the fact that the quotient $[C_2]^n/D([C_2]^n)$ is isomorphic to $C_2^n$ --- see \cite[Proof of Lemma 1.5]{Stoll}.

Suppose that $H$ is a subgroup of $[C_2]^n$ with $[C_2]^n/H$ isomorphic to $C_2$. By item 1 above, we deduce that $D([C_2]^n)$ is contained in $H$. By item 2 and by Lagrange theorem, we have 
$$
|H/D([C_2]^n)|=\frac{2^n}{2}=2^{n-1}.
$$
Furthermore, the subgroups containing $D([C_2]^n)$ correspond bijectively to subgroups of $[C_2]^n/D([C_2]^n)$. As in the first proof, the group $C_2^n$ is a vector space of dimension $n$ over $\F_2$, and every subgroup of order $2^{n-1}$ corresponds bijectively to a subspace of dimension $n-1$. This number is well known to be $2^n-1$. 
\end{proof}

For $n\ge q$, let $c_n=P_n(0)$ be the constant term of the minimal polynomial of $x_n$, and let $c_1=\nu=-P_1(0)$. 

\begin{lemma}\label{lemStoll2}
Let $p$ be a prime that divides some $c_n$. Let $m=\min\{n \ge 1 \colon p\textrm{ divides } c_n\}$ and $e$ be the order of $c_m$ at $p$. For every $n$, $p$ divides $c_n$ if and only if $p^e$ divides $c_n$ if and only if $m$ divides $n$.
\end{lemma}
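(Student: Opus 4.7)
My plan is to use the recursion $c_{n+1} = c_n^2 - \nu$, which follows from $P_{n+1}(t) = P_n(t)^2 - \nu$ evaluated at $t=0$ together with the convention $c_1 = \nu$ (so that $c_2 = c_1^2 - \nu = \nu^2 - \nu$ matches $P_2(0)$). Iterating, $c_{n+k} = f^{\circ k}(c_n)$ where $f(t) = t^2 - \nu$. The lemma is then a statement about when the forward orbit of $0$ under $f$ revisits $0$ modulo $p$, and at what $p$-adic precision.

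First I would prove the harder implication ``if $m\mid n$ then $p^e\mid c_n$'' by induction on $k$, where $n=km$. Assuming $c_{km}\equiv 0\pmod{p^e}$, the recursion gives $c_{km+1}\equiv -\nu\pmod{p^e}$; the sign is then erased by squaring, so $c_{km+2}\equiv \nu^2-\nu = c_2\pmod{p^e}$, and a secondary induction on $j\ge 2$ yields $c_{km+j}\equiv c_j\pmod{p^e}$. For $m\ge 2$, taking $j=m$ gives $c_{(k+1)m}\equiv c_m\equiv 0\pmod{p^e}$, closing the induction. The edge case $m=1$ is handled separately: then $p^e\mid\nu$, so $c_{n+1}\equiv c_n^2\pmod{p^e}$ propagates divisibility trivially.

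For the converse ``$p\mid c_n$ implies $m\mid n$'', I would write $n = qm + r$ with $0\le r<m$ and reduce the congruences of the previous paragraph modulo $p$. If $r\ge 2$, then $c_n\equiv c_r\pmod p$ forces $p\mid c_r$, contradicting the minimality of $m$. If $r = 1$, then $c_n\equiv -\nu\pmod p$ forces $p\mid\nu$, i.e.\ $m=1$, contradicting $0<r<m$. Hence $r=0$ and $m\mid n$. Combined with the trivial implication $p^e\mid c_n\Rightarrow p\mid c_n$, the three conditions in the statement become equivalent.

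The main obstacle I foresee is not really mathematical but organizational: the congruence $c_{km+j}\equiv c_j\pmod{p^e}$ fails at $j=1$, where a sign appears, and the degenerate case $m=1$ — in which the sign-erasing argument collapses and one must instead observe that $p\mid\nu$ directly — must be split off. Both are routine once one has the setup, but the write-up requires a little care to keep these two exceptional features from contaminating the main induction.
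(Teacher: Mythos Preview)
Your proposal is correct and follows essentially the same approach as the paper: both exploit the recursion $c_{n+1}=c_n^2-\nu$ (equivalently $c_{n+k}=f^{\circ k}(c_n)$) to show that the orbit of $0$ under $f$ returns to $0\pmod p$ exactly at multiples of $m$, with the same $p$-adic valuation each time. The paper packages the step-by-step congruence into the single observation $c_{\ell+m}=f^{\circ\ell}(c_m)\equiv f^{\circ\ell}(0)\pmod{c_m^2}$, which gives the mod $p^e$ statement in one line, whereas you unroll this into the secondary induction $c_{km+j}\equiv c_j\pmod{p^e}$; your version is slightly longer but has the virtue of handling explicitly the sign at $j=1$ and the degenerate case $m=1$, which the paper's write-up glosses over.
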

\begin{proof}
There is also a simple proof in \cite[proof of Lemma 1.1]{Stoll}, inspired by Odoni \cite{Odoni}. We give a very elemental proof for the sake of completeness. 

Recall that $P_n(t)=f^{\circ n}(t)$, where $f(t)=t^2-\nu$. If $n=\ell + m$ for some integer $\ell>0$, then we have 
$$
c_n=f^{\circ \ell}(f^{\circ m}(0))=f^{\circ \ell}(c_m)\equiv c_\ell \pmod{c_m^2},
$$
hence, if $n$ is a multiple of $m$, then $c_n$ has the same order at $p$ as $c_m$. Conversely, write $n=qm+r$, with $0\le r < m$. We have 
$$
c_n=f^{\circ r}(f^{\circ qm}(0)) \equiv c_r \pmod{c_{qm}^2}, 
$$
hence $c_n$ is congruent to $c_r$ modulo $p$. So, if $p$ divides $c_n$, then it divides $c_r$ with $r<m$, which is a contradiction unless $r=0$. 
\end{proof}

We recall that non-zero rational numbers $a_1$, \dots, $a_n$ are \emph{$2$-independent} if their residue classes in the $\F_2$-vector space $\Q^*/(\Q^*)^2$ are linearly independent. In \cite[Section 1, p. 16]{Stoll}, Stoll proves the following theorem. 

\begin{theorem}\label{thmStoll1}
The group $\Gal(L_n)$ is isomorphic to $[C_2]^n$ if and only if $c_1$, \dots, $c_n$ are $2$-independent. 
\end{theorem}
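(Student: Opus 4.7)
The plan is induction on $n$. The base case $n=1$ is immediate: $L_1=\Q(\sqrt{\nu})$ and $\Gal(L_1)\cong C_2=[C_2]^1$ iff $c_1=\nu$ is a non-square, which is the $2$-independence of $\{c_1\}$.

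For the inductive step, if $c_1,\dots,c_{n-1}$ are not $2$-independent, then by induction $\Gal(L_{n-1})\not\cong[C_2]^{n-1}$; since $\Gal(L_{n-1})$ is a quotient of $\Gal(L_n)$ and the natural embedding $\Gal(L_n)\hookrightarrow[C_2]^n$ is compatible with the tree projection $[C_2]^n\twoheadrightarrow[C_2]^{n-1}$, we get $\Gal(L_n)\not\cong[C_2]^n$, so both sides of the equivalence fail. So assume $c_1,\dots,c_{n-1}$ are $2$-independent, whence $\Gal(L_{n-1})\cong[C_2]^{n-1}$ by induction.

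Writing $\beta_1,\dots,\beta_{2^{n-1}}$ for the roots of $P_{n-1}$, we have $L_n=L_{n-1}(\sqrt{\nu+\beta_1},\dots,\sqrt{\nu+\beta_{2^{n-1}}})$, so $\Gal(L_n)\cong[C_2]^n$ iff $[L_n:L_{n-1}]=2^{2^{n-1}}$ iff the kernel $K$ of the $\F_2$-linear map $\phi\colon\F_2^{2^{n-1}}\to L_{n-1}^*/(L_{n-1}^*)^2$, $e_i\mapsto[\nu+\beta_i]$, is trivial. Now $\phi$ is equivariant for $G=\Gal(L_{n-1}/\Q)\cong[C_2]^{n-1}$, which permutes the $e_i$ via its transitive tree-action on $\{\beta_i\}$. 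Since $G$ is a $2$-group acting on the $\F_2$-vector space $K$, $K\ne 0$ implies $K^G\ne 0$; but by transitivity of $G$ on coordinates, $(\F_2^{2^{n-1}})^G=\{0,(1,\dots,1)\}$, so $K\ne 0$ forces $(1,\dots,1)\in K$, i.e., $\phi(1,\dots,1)=\prod_i(\nu+\beta_i)=P_{n-1}(-\nu)=P_n(0)=c_n$ is a square in $L_{n-1}$. Conversely, if $c_n$ is a square in $L_{n-1}$ then $(1,\dots,1)\in K$. Hence $\Gal(L_n)\cong[C_2]^n$ iff $c_n$ is not a square in $L_{n-1}$.

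It remains to show that a rational $c$ is a square in $L_{n-1}$ iff $c\in\langle c_1,\dots,c_{n-1}\rangle$ in $\Q^*/(\Q^*)^2$. For each $j\le n-1$, $c_j$ is a square in $L_j$: indeed $c_1=\nu=x_1^2$, and for $j\ge 2$, pairing the $2^j$ roots of $P_j$ as $\{\gamma_k,-\gamma_k\}_{k=1}^{2^{j-1}}$ gives $c_j=P_j(0)=\prod_i\beta_i^{(j)}=(-1)^{2^{j-1}}\left(\prod_k\gamma_k\right)^2=\left(\prod_k\gamma_k\right)^2$. Hence each non-trivial $d\in\langle c_1,\dots,c_{n-1}\rangle\subseteq\Q^*/(\Q^*)^2$ yields a quadratic subfield $\Q(\sqrt d)\subseteq L_{n-1}$; the $2$-independence of the $c_i$ produces $2^{n-1}-1$ distinct such subfields, and by Lemma \ref{thLemGroups} applied to $L_{n-1}$ these exhaust its quadratic subfields, proving the claim and closing the induction. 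The hardest step is the $p$-group fixed-vector reduction, which replaces the na\"ive requirement that all $2^{n-1}$ square roots be $2$-independent over $L_{n-1}$ by the single condition on the full product $c_n$; without it, one would have to directly rule out squares among arbitrary sub-products $\prod_{i\in S}(\nu+\beta_i)$.
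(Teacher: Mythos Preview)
The paper does not supply its own proof of Theorem \ref{thmStoll1}; it simply quotes the result from Stoll \cite[Section 1]{Stoll}. Your argument is correct and is essentially Stoll's: the same induction, the same Kummer-theoretic description of $[L_n:L_{n-1}]$, and the same key reduction via the fixed-vector lemma for a $2$-group acting on an $\F_2$-permutation module, which isolates the all-ones vector and hence the single product $\prod_i(\nu+\beta_i)=c_n$.

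One small remark on the last paragraph: you invoke Lemma \ref{thLemGroups} for $L_{n-1}$. In the paper that lemma is stated for $L_n$ under the standing hypothesis (via Theorem \ref{thmWreathProdGal}) that $\Gal(L_n)\cong[C_2]^n$, but its proof is purely a count of index-$2$ subgroups of $[C_2]^n$ and therefore applies verbatim to $[C_2]^{n-1}$ once your inductive hypothesis $\Gal(L_{n-1})\cong[C_2]^{n-1}$ is in place. So there is no circularity, but it would be cleaner to cite the group-theoretic statement directly (that $[C_2]^{n-1}$ has exactly $2^{n-1}-1$ subgroups of index $2$) rather than the field-theoretic Lemma \ref{thLemGroups}.
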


We also need the following simple observation: $\sqrt{c_1}$, \dots, $\sqrt{c_n}$ all lie in $L_n$. 


We can now prove our theorem.

\begin{theorem}\label{thGalois}
Suppose the $\nu=2^{2m}\mu$, with $\mu\ge3$ odd and square-free and $m\geq 1$.  The field $L=\bigcup_nL_n$ does not contain $\sqrt{2}$ --- so in particular $\Ocal^{(\nu,0)}$ does not contain $\sqrt{2}$.
\end{theorem}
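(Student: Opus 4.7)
The plan is a counting argument via the Galois correspondence. Since $\nu = 2^{2m}\mu$ is a multiple of $4$ for $m\ge 1$, Theorem \ref{thmWreathProdGal} gives $\Gal(L_n)\cong [C_2]^n$, and Lemma \ref{thLemGroups} then tells us that $L_n$ contains exactly $2^n-1$ quadratic extensions of $\Q$. By Theorem \ref{thmStoll1}, the isomorphism $\Gal(L_n)\cong [C_2]^n$ is equivalent to $c_1,\ldots,c_n$ being $2$-independent in $\Q^*/(\Q^*)^2$.

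First I would exhibit all the quadratic subfields of $L_n$ explicitly. Since $\sqrt{c_i}\in L_n$ for each $i$, every non-empty subset $S\subseteq\{1,\ldots,n\}$ gives a quadratic subfield $\Q(\sqrt{\prod_{i\in S}c_i})\subseteq L_n$. The $2$-independence of $c_1,\ldots,c_n$ guarantees that the $2^n-1$ non-trivial products $\prod_{i\in S}c_i$ lie in distinct non-trivial classes of $\Q^*/(\Q^*)^2$, so the corresponding subfields are pairwise distinct and, by Lemma \ref{thLemGroups}, they exhaust all quadratic subfields of $L_n$.

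Now I would assume for contradiction that $\sqrt{2}\in L_n$ for some $n$. Then $\Q(\sqrt{2})$ must coincide with one of the $\Q(\sqrt{\prod_{i\in S}c_i})$, which forces $2\prod_{i\in S}c_i$ to be a rational square for some non-empty $S$. The heart of the argument is then to show, by induction on $n$ using the recursion $c_n=c_{n-1}^2-\nu$, that the $2$-adic valuation $v_2(c_n)$ equals $2m$ for every $n\ge 1$: the base case is $v_2(c_1)=v_2(\nu)=2m$, and the inductive step uses $v_2(c_{n-1}^2)=4m>2m=v_2(\nu)$, so that $v_2(c_n)=v_2(c_{n-1}^2-\nu)=v_2(\nu)=2m$. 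Consequently $v_2\left(\prod_{i\in S}c_i\right)=2m|S|$ is even, while $v_2\left(2\prod_{i\in S}c_i\right)=1+2m|S|$ is odd, contradicting the fact that the latter is supposed to be a rational square. Hence $\sqrt{2}\notin L_n$ for any $n$, so $\sqrt{2}\notin L=\bigcup_n L_n$.

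I do not expect a serious obstacle here: the square-freeness of $\mu$ plays no role in this argument (it is only needed elsewhere to ensure $\nu$ is not a square, so that the tower strictly increases), and the essential observation is simply that every $c_i$ shares the same even $2$-adic valuation $2m$, so no product of them can absorb the extra factor of $2$ needed to produce $\Q(\sqrt{2})$.
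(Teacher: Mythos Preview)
Your proposal is correct and follows essentially the same approach as the paper: exhibit the $2^n-1$ quadratic subfields as $\Q(\sqrt{\prod_{i\in S}c_i})$ using $2$-independence, then rule out $\Q(\sqrt{2})$ by checking that each $c_i$ has even $2$-adic valuation $2m$. The only cosmetic difference is that the paper obtains $v_2(c_n)=2m$ by invoking Lemma~\ref{lemStoll2} with $p=2$, whereas you re-derive it directly via the recursion $c_n=c_{n-1}^2-\nu$; your observation that square-freeness of $\mu$ is not actually used in this step is also correct.
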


\begin{proof} 
From Theorem \ref{thmWreathProdGal} and Theorem \ref{thmStoll1} the number $c_1$, \dots, $c_n$ are $2$-independent. There are 
$$
\binom{n}{1}+\dots+\binom{n}{n}=2^n-1
$$
distinct possible products $\sqrt{c_{i_1}}\dots \sqrt{c_{i_k}}$. By the observation above, each product corresponds to a distinct quadratic extensions in $L_n$. We conclude with Lemma \ref{thLemGroups} that there are no more. 

Since $c_1=\nu$, by Lemma \ref{lemStoll2}, $2^{2m}$ is the highest power of $2$ which divides $c_n$ for each $n\ge1$. Hence, in every product of the $\sqrt{c_i}$, an even power of $2$ comes out of the square root, and we deduce that $\sqrt2$ does not appear in any of the quadratic extensions that we found. 
\end{proof}

Here is an example. For $\nu=12$, we have $L_1=\Q(\sqrt{12})=\Q(\sqrt3)$, and 
$$
L_2=L_1\left(\sqrt{12+\sqrt{12}},\sqrt{12-\sqrt{12}}\right)=\Q\left(\sqrt3,\sqrt{12+\sqrt{12}},\sqrt{12-\sqrt{12}}\right).
$$
We have
$$
\sqrt{12+\sqrt{12}}\sqrt{12-\sqrt{12}}=\sqrt{12^2-12}=\sqrt{12\cdot11}=2\sqrt{33}=\sqrt{c_2}.
$$
Hence, in $L_2$, we have the three following square roots: $\sqrt{3}$, $\sqrt{33}$ and $\sqrt{11}$. 


It is still an open problem to characterize the $\nu$ for which $\Gal(L_n)$ is $[C_2]^n$ for every $n$. Note that for $\nu=3$, the above does not work since $\sqrt{2}$ appears immediately in the tower. Nevertheless, for $\nu=7$, $\sqrt2$ does not appear in the first levels of the tower. This leads to the following question. 

\begin{question}
Is $\Gal(L_n)$ equal to $[C_2]^n$ when $\nu=7$? 
\end{question}

\noindent Marianela Castillo\\
Universidad de Concepci\'on, Los \'Angeles, Chile\\
Escuela de Educaci\'on\\
Departamento de Ciencias B\'asicas\\
Email: mcastillo@udec.cl\\

\noindent Xavier Vidaux (corresponding author)\\
Universidad de Concepci\'on, Concepci\'on, Chile\\
Facultad de Ciencias F\'isicas y Matem\'aticas\\
Departamento de Matem\'atica\\
Casilla 160 C\\
Email: xvidaux@udec.cl\\

\noindent Carlos R. Videla\\
Mount Royal University, Calgary, Canada\\
Department of Mathematics and Computing\\
email: cvidela@mtroyal.ca


\begin{thebibliography}{}







\bibitem {JyV} M. Jarden y C. Videla, \textit{Undecidability of Families of Rings of Totally Real Integers}. International Journal of Number Theory \textbf{04}, 835-850 (2008).






\bibitem{Marcus} D. Marcus, \textit{Number Fields}.  Springer-Verlag, New Yonk (1977).


\bibitem{Nark}  W. Narkiewicz, \emph{Elementary and analytic theory of algebraic numbers.} Third edition. Springer Monographs in Mathematics. Springer-Verlag, Berlin, 2004. xii+708 pp. ISBN: 3-540-21902-1

\bibitem{Odoni}  R. W. K. Odoni, \emph{The Galois theory of iterates and composites of polynomials}. Proc. London Math. Soc. (3) {\bf 51}, no. 3, 385--414 (1985).




\bibitem{Rob62} J. Robinson, \emph{On the decision problem for algebraic rings}, The collected works of Julia Robinson, Collected Works {\bf 6}, American Mathematical Society, Providence, RI (1996). 

\bibitem{Rotman} J. J. Rotman, \textit{An introduction to the theory of groups}. Fourth edition. Graduate Texts in Mathematics, {\bf 148}. Springer-Verlag, New York, 1995. xvi+513 pp. ISBN: 0-387-94285-8. 





\bibitem{Stoll} M. Stoll, \textit{Galois Group over $\Q$ of some iterated polynomials}.  Arch. Math., Vol. 59, 239-244 (1992).



\bibitem{VyV} X. Vidaux and C. R. Videla, \textit{Definability of the natural numbers in totally real towers of nested square roots}, Proc. Amer. Math. Soc. {\bf 143}, 4463--4477 (2015).

\bibitem{VV16} X. Vidaux and C. R. Videla, \textit{A note on the Northcott property and undecidability}, Bull. London Math. Soc. {\bf 48}, 58--62 (2016), doi: 10.1112/blms/bdv089. 




\bibitem{Wor} A. Woryna, \emph{The Rank and Generating Set for Iterated Wreath Products of Cyclic Groups}, Communications in Algebra, Volume 39 - Issue 7 (2011). 

\bibitem{Wash} L. C. Washington, \emph{Introduction to cyclotomic fields.} Graduate Texts in Mathematics, {\bf 83}. Springer-Verlag, New York, 1982. xi+389 pp. ISBN: 0-387-90622-3. 

\bibitem{Widmer16} M. Widmer, \emph{Property (N), Decidability, and Diophantine Approximation}, Oberwolfach Reports (2016), \url{https://www.mfo.de/occasion/1615}


\end{thebibliography}
\end{document}